\newtheorem{X}{X}[section]
\newtheorem{lemma}[X]{Lemma}
\newtheorem{proposition}[X]{Proposition}
\newtheorem{theorem}[X]{Theorem}
\theoremstyle{definition}
\newtheorem{remark}[X]{Remark}
\newcommand{\Z}{\mathbb Z}
\newcommand{\Dstar}{\mathcal D_{k}(\phi;X)}
\newcommand{\DplusAvg}{\mathscr D_{K,h}^+(\phi)}
\newcommand{\DminusAvg}{\mathscr D_{K,h}^-(\phi)}
\newcommand{\DplusminusAvg}{\mathscr D_{K,h}^{\pm}(\phi)}
\newcommand{\eps}{\varepsilon}
\numberwithin{equation}{section}
\title[Low-lying zeros in families of holomorphic cusp forms]{Low-lying zeros in families of holomorphic cusp forms: \\ the weight aspect}
\author{Lucile Devin, Daniel Fiorilli and Anders S\"odergren}
\address{Centre de recherches mathématiques, Université de Montréal, Pavillon André-Aisenstadt, \newline \rule[0ex]{0ex}{0ex}\hspace{8pt} 2920 Chemin de la tour, Montréal, Québec, H3T 1J4, Canada}
\email{devin@crm.umontreal.ca}
\address{CNRS, Laboratoire de math\'ematiques d'Orsay, Universit\'e Paris-Sud, France }
\address{
D\'epartement de math\'ematiques et de statistique, Universit\'e d'Ottawa, 585 King Edward, Ottawa,\newline
\rule[0ex]{0ex}{0ex}\hspace{8pt} Ontario, K1N 6N5, Canada}
\email{daniel.fiorilli@math.u-psud.fr}
\address{Department of Mathematical Sciences, Chalmers University of Technology and the University \newline
\rule[0ex]{0ex}{0ex}\hspace{8pt} of Gothenburg, SE-412 96 Gothenburg, Sweden}
\email{andesod@chalmers.se}
\date{\today}
\begin{document}

\maketitle

\begin{abstract}
We study low-lying zeros of $L$-functions attached to holomorphic cusp forms of level~$1$ and large weight. In this family, the Katz--Sarnak heuristic with orthogonal symmetry type was established in the work of Iwaniec, Luo and Sarnak for test functions $\phi$ satisfying the condition supp$(\widehat \phi) \subset(-2,2)$. We refine their density result by uncovering lower-order terms that exhibit a sharp transition when the support of $\widehat \phi$ reaches the point $1$. In particular the first of these terms involves the quantity $\widehat \phi(1)$ which appeared in previous work of Fouvry--Iwaniec and Rudnick in symplectic families. Our approach involves a careful analysis of the Petersson formula and circumvents the assumption of GRH for $\text{GL}(2)$ automorphic $L$-functions. Finally, when supp$(\widehat \phi)\subset (-1,1)$ we obtain an unconditional estimate which is significantly more precise than the prediction of the $L$-functions Ratios Conjecture.  
\end{abstract}

\section{Introduction}

Katz and Sarnak~\cite{KS} conjectured that the distribution of low-lying zeros in a family $\mathcal F$ of $L$-functions is governed by a certain random matrix model $G(\mathcal F)$ called the symmetry type of $\mathcal F$. This symmetry type has been determined in many families; see for example \cite{FI,HR, ILS,M1,OS,Y}, as well as the references in \cite{SST}. Sarnak, Shin and Templier~\cite{SST} recently refined the Katz--Sarnak heuristics and introduced invariants which allow for a conjectural determination of the symmetry type. 

In the current paper we focus on the family of classical holomorphic cusp forms of level $1$ and large even weight $k$. As in \cite[Chapter 10]{ILS}, this will ease the exposition and allow for a more transparent analysis. For this family, the predictions of Katz and Sarnak were confirmed in the influential work of Iwaniec, Luo and Sarnak~\cite{ILS} for a certain class of test functions, under the assumption of the Riemann Hypothesis for Dirichlet $L$-functions and holomorphic cusp form $L$-functions. Our first goal is to relax these conditions by only assuming the Riemann Hypothesis for Dirichlet $L$-functions. Our second and main goal is to refine the Iwaniec--Luo--Sarnak density results by determining lower-order terms up to an arbitrary negative power of $\log k$. 

More precisely, we fix a basis $B_k$ of Hecke eigenforms in the space $H_k$ of holomorphic modular forms of level $1$ and even weight $k$. We normalize so that for every $$f(z)= \sum_{n=1}^{\infty}a_f(n) n^{\frac{k-1}2} e^{2\pi i n z}\in B_k,$$ the first coefficient satisfies $a_f(1)=1$. Hence, the Hecke eigenvalues of $f$ are given by $\lambda_f(n)=a_f(n)$ and for $\Re(s)>1$
the $L$-function of $f$ takes the form
$$ L(s,f)= \sum_{n=1}^{\infty} \frac{\lambda_f(n)}{n^s}. $$
This classically extends to an entire function and satisfies a functional equation relating the values at $s$ to those at $1-s$. In the sums over $f\in B_k$ to be considered in this paper, we will scale each term with the harmonic weight 
$$\omega_f:= \frac{\Gamma(k-1)}{(4\pi)^{k-1} (f,f)}, $$
where
$$ (f,f):= \int_{SL_2(\Z)  \backslash \mathbb H} y^{k-2} |f(z)|^2 dxdy. $$
Note that $ k^{-1-\eps} \ll_{\eps} \omega_f \ll_{\eps} k^{-1+\eps}$ (see \cite[Lemma 2.5]{ILS}, \cite[p.164]{HL} and \cite[Theorem 2]{Iw}), and moreover 
\begin{equation}
\Omega_k:=\sum_{f\in B_k} \omega_f = 1+O(2^{-k})
\label{equation total weight}
\end{equation}
(take $m=n=1$ in Proposition~\ref{proposition orthogonality bound}). The use of these essentially constant weights is standard (see for instance \cite[Chapter 10]{ILS}, \cite{M})\footnote{Note that the situation can be drastically different with  arithmetic weights as in \cite{KR}.}. 

For an even Schwartz test function $\phi$, we define the $1$-level density
$$ \Dstar:= \frac 1{\Omega_k}\sum_{f \in B_k} \omega_f \sum_{\gamma_f} \phi\Big( \gamma_f \frac{\log X}{2\pi} \Big). $$
Moreover, for $h$ a non-negative, not identically zero smooth weight function with compact support in $\mathbb{R}_{>0}$ we define the following averages of the $1$-level densities over families of constant sign\footnote{Note that for any $f\in H_k$, the sign of the functional equation of $L(s,f)$ is given by $(-1)^{\frac k2}$.} of the functional equation:
$$ \DplusminusAvg := \frac{1}{H^\pm(K)}\sum_{k\equiv 3\pm 1 \bmod 4}h\Big(\frac{k-1}{K}\Big)\mathcal D_k(\phi;K^2), $$
where $H^{\pm}(K) = \sum_{k\equiv 3 \pm 1 \bmod 4}h\Big(\frac{k-1}{K}\Big)$. 
The Katz--Sarnak prediction for this family (see \cite{ILS}, \cite[Conjecture 2, Section 2.7]{SST}) states that

\begin{equation}
  	\lim_{K\rightarrow \infty} \DplusminusAvg = \int_{\mathbb R} \widehat \phi \cdot \widehat W^\pm,
\label{equation KS prediction}
\end{equation}
with 
	  $$ \widehat W^+(t)=\widehat W (SO(\text{even}))(t) = \delta_0(t)+\frac{\eta(t)}2  ; \hspace{.7cm} \widehat W^-(t)=\widehat W (SO(\text{odd}))(t)= \delta_0(t)-\frac{\eta(t)}2+1,  $$
	where $\eta(t)=1$ for $|t|<1$,  $\eta(\pm 1)=\tfrac 12$ and $\eta(t)=0$ for $|t|>1$, and $\widehat \phi(\xi) := \int_{\mathbb R}  \phi(x) e^{-2\pi i \xi x}dx $.
Under the Riemann Hypothesis for Dirichlet and holomorphic cusp form $L$-functions, the estimate \eqref{equation KS prediction} was confirmed\footnote{This work has been extended to families of more general automorphic $L$-functions in~\cite{ST}.} in \cite[Theorem 1.3]{ILS} under the condition supp$(\widehat \phi)\subset (-2,2)$. 

We now state our main theorem which, in the case when the level $N=1$, refines the estimate in \cite[Theorem 1.3]{ILS} by weakening its assumptions and obtaining lower-order terms which contain a phase transition as the support of $\widehat \phi$ reaches $1$.

\begin{theorem}
\label{theorem main}
Let $\phi$ be an even Schwartz test function for which supp$(\widehat \phi) \subset (-2,2)$. Assuming the Riemann Hypothesis for Dirichlet $L$-functions, we have the estimate 
\begin{equation}
	\DplusminusAvg = \int_{\mathbb R} \widehat \phi \cdot \widehat{W}^\pm + \sum_{1 \leq j\leq J} \frac{R_{j,h}\widehat \phi^{(j-1)}(0) \pm S_{j,h}\widehat \phi^{(j-1)}(1)}{(\log K)^j}+O_{\phi,h,J}\Big( \frac 1{(\log K)^{J+1}}\Big),
\label{equation main theorem}
	  \end{equation}
where the constants $R_{j,h}$ and $S_{j,h}$ appearing in the lower-order terms only depend on the weight function $h$ (see \eqref{equation definition S}, \eqref{equation definition R 1}, and \eqref{equation definition R 2}).  
\end{theorem}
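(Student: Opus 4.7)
The plan is to combine the approach of Iwaniec--Luo--Sarnak with a careful asymptotic analysis of the Petersson off-diagonal and of the archimedean contribution. First I apply the explicit formula to each $L(s,f)$ with $X=K^2$ to express $\Dstar$ as a sum of three pieces: an archimedean/conductor contribution, essentially of the form $\widehat\phi(0)\log k/\log X$ plus smaller corrections; a prime sum
$$P_1(f)=-\frac{2}{\log X}\sum_p\frac{\log p}{\sqrt p}\,\widehat\phi\!\left(\frac{\log p}{\log X}\right)\lambda_f(p);$$
and an analogous sum $P_2(f)$ over $p^2$ involving $\lambda_f(p^2)$. Deligne's bound handles the prime powers $p^\nu$ with $\nu\geq 3$ trivially. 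After the harmonic weighting by $\omega_f$ and the smooth average over $k$ with $h((k-1)/K)$, the archimedean piece is a polynomial in $\log k$ times $\widehat\phi(0)$; Taylor-expanding around $k=K$ produces the first batch of $R_{j,h}\widehat\phi^{(j-1)}(0)/(\log K)^j$ terms, whose coefficients are explicit moments of $h$.

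Next, using the Hecke relation $\lambda_f(p^2)=\lambda_f(p)^2-1$ together with the Petersson formula
$$\sum_{f\in B_k}\omega_f\lambda_f(m)\lambda_f(n)=\delta_{m=n}+2\pi i^{-k}\sum_{c\geq 1}\frac{S(m,n;c)}{c}J_{k-1}\!\left(\frac{4\pi\sqrt{mn}}{c}\right),$$
I reduce the averages of $P_1$ and $P_2$ to diagonal pieces plus Kloosterman--Bessel sums. The diagonal contribution from the $-1$ in the Hecke relation yields a sum of the form $\sum_p (\log p)/p\cdot\widehat\phi(2\log p/\log X)$ which, via the Riemann zeta explicit formula (RH for $\zeta$ being a special case of our hypothesis), expands as a further series of $\widehat\phi^{(j-1)}(0)/(\log K)^j$ contributions absorbed into the $R_{j,h}$.

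The main novelty -- and the source of the $\widehat\phi(1)$ transition -- is the $c=1$ off-diagonal piece of the $\lambda_f(p)$ sum. Its contribution to $\DplusminusAvg$ is a scalar multiple of
$$\sum_p \frac{\log p}{\sqrt p}\,\widehat\phi\!\left(\frac{\log p}{\log X}\right)T_\pm(p),\qquad T_\pm(p):=\sum_{k\equiv 3\pm 1\bmod 4}h\!\left(\frac{k-1}{K}\right)i^{-k}J_{k-1}(4\pi\sqrt p).$$
Since $i^{-k}$ is constant on each residue class modulo $4$, $T_\pm(p)$ is a smoothly weighted sum of Bessel functions. Using a Mellin--Barnes or stationary-phase representation, I will show that $T_\pm(p)$ is negligible away from the transition zone $4\pi\sqrt p\asymp k\asymp K$ (i.e.\ $p\asymp K^2$) and that inside this zone it admits an asymptotic expansion in inverse powers of $\log K$ whose leading profile is localized at $\log p/\log X=1$. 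After convolving with the prime sum and replacing primes by a smooth integral with power-saving error (again via the zeta explicit formula), Taylor expansion of $\widehat\phi$ at $1$ produces exactly the $S_{j,h}\widehat\phi^{(j-1)}(1)/(\log K)^j$ terms. For $c\geq 2$ the Weil bound on $S(p,1;c)$ combined with the decay $J_{k-1}(z)\ll (ez/(2k))^{k-1}$ in the regime $z\leq k$ renders the contribution of every such $c$ arbitrarily small.

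The principal obstacle is the analysis of $T_\pm(p)$: extracting a full $J$-term asymptotic that is uniform in $p$ up to $p\leq K^{4}$, and sharp enough that the resulting prime sum has remainder $O_{\phi,h,J}((\log K)^{-J-1})$. This demands a delicate stationary-phase treatment across the transition $4\pi\sqrt p = k$, where $J_{k-1}$ exits its exponentially small regime, and the identification of each emerging Taylor coefficient at $\widehat\phi^{(j-1)}(1)$ with a concrete Mellin integral of $h$ giving the constant $S_{j,h}$. Managing this transition cleanly while simultaneously keeping track of all the sub-leading archimedean and prime-power corrections that feed into $R_{j,h}$ is what makes the uniform error $(\log K)^{-J-1}$ non-trivial.
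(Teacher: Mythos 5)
Your overall framework---explicit formula, Petersson formula, removal of higher prime powers via the Hecke relation, separation of archimedean and off-diagonal pieces---matches the paper's. But there is a genuine gap in the off-diagonal analysis: the claim that all terms with $c\geq 2$ in the Petersson formula are negligible fails as soon as supp$(\widehat\phi)$ extends past $1$. The exponential bound $J_{k-1}(z)\ll(ez/(2k))^{k-1}$ only applies when $z$ is below the transition point $z\approx k$; for $z\gtrsim k$ the Bessel function is of size $z^{-1/2}$. When supp$(\widehat\phi)\subset(-2,2)$ the primes reach $p\leq K^{4-\eps}$, so $z=4\pi\sqrt{p}/c$ exceeds $k\asymp K$ for $c$ as large as $K^{1-\eps}$; the Weil bound then gives nothing. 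Indeed, after averaging over $k$ against $h((k-1)/K)$ (via the Iwaniec/ILS Bessel-sum lemma, Lemma~\ref{lemma:ILS bessel sum} in the paper), the sum over $k$ localizes the Bessel term to $z\asymp K$ and produces a factor $h\big(4\pi\sqrt{p}/(cK)\big)$, which is non-negligible precisely when $c\asymp\sqrt{p}/K$. Writing $p=K^{2u}$, this means every $u\in(1,\sigma)$ draws its contribution from $c\asymp K^{u-1}$. These terms are not small: using $\sum_{c\leq y}c\mu^2(c)/\varphi(c)=y+O(\sqrt y)$, their sum for $u>1$ reconstructs exactly the Katz--Sarnak main term $\mp\int_1^\infty\widehat\phi$ that distinguishes $SO(\mathrm{even})$ from $SO(\mathrm{odd})$. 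Moreover, the paper's constants $S_{j,h}$ involve derivatives at $s=0$ of $sZ(s)(4\pi)^{s-1}\mathcal Mh(1-s)$ with $Z(s)=\sum_c\mu^2(c)/(c^s\varphi(c))$; truncating to $c=1$ replaces $Z(s)$ by $1$ and yields wrong coefficients for every $j$.

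Two further remarks. To convert $\sum_{p\leq t}S(p,1;c)\log p$ into a smooth main term with power-saving error for general $c$ one needs the Riemann Hypothesis for all Dirichlet $L$-functions modulo $c$, not just for $\zeta$; your restriction to $c=1$ is why you only invoked RH for $\zeta$, so once the $c\geq 2$ terms are restored the stronger hypothesis becomes necessary (and is what the theorem assumes). Also, the stationary-phase/Mellin--Barnes treatment you propose for $T_\pm(p)$ is not needed: the transformation $2\sum_{k\equiv 0\,(2)}i^k h((k-1)/K)J_{k-1}(x)=(K/\sqrt x)\,\Im\big(\overline{\zeta_8}e^{ix}\hbar(K^2/(2x))\big)+O(x/K^4)$ is already available in the literature and is used directly in the paper; the $\hbar$ piece is negligible uniformly for $p\leq K^{4-\eps}$ since then $K^2/(2x)\gg K^{\eps}$, and the non-oscillating companion $2\sum_k h((k-1)/K)J_{k-1}(x)=h(x/K)+O(x/K^3)$ supplies the $h(4\pi\sqrt p/(cK))$ factor from which both the main term and the transition terms are extracted.
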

  
We deduce Theorem 1.1 from a power-saving formula for the $1$-level density (see \eqref{eq:power saving in one level density}), which we combine with an asymptotic evaluation of the resulting terms (see Theorem \ref{theorem final result}). We were able to circumvent the use of the Riemann Hypothesis for holomorphic cusp form $L$-functions by refining the estimate \cite[Corollary 2.2]{ILS} on the remainder in the Petersson trace formula (see Proposition \ref{proposition orthogonality bound}). The specific estimate we obtain is the following:
\begin{equation}
  \sum_{f\in B_k}  \omega_f \lambda_f(m)\lambda_f(n)= \delta(m,n)+ O_\eps\Big(  \frac{(m,n)^{\frac 12}(mn)^{\frac 14+\eps}}{k} +\frac{ k^{\frac 16}(m,n)^{\frac 12}}{(mn)^{\frac 14-\eps}}\Big). 
\label{equation improvement ILS}
\end{equation}
In particular, when $(m,n)=1$ this estimate is nontrivial in the range $mn\leq k^{4-\eps}$, whereas \cite[Corollary 2.2]{ILS} is nontrivial up to $mn\leq k^{\frac {10}3-\eps}$.

The terms involving $ \widehat \phi^{(j)}(1)$ in \eqref{equation main theorem} are responsible for a sharp transition at $1$ in these orthogonal families and are analogous to those obtained in symplectic families in \cite{FI,FPS2,FPS3,R,W}. Indeed, in the family of real Dirichlet characters considered in \cite{FPS2}, after applying the explicit formula and treating the resulting sums over primes by repeatedly using the Poisson summation formula, one obtains lower-order terms involving $ \widehat \phi^{(j)}(1)$. This work was inspired by the function field case considered in \cite{R}, in which, using Poisson summation, the $1$-level density is turned into an average of the trace of the Frobenius class in the hyperelliptic ensemble, from which a transition term is isolated using the explicit formula. Transition terms also surface in predictions coming from the $L$-function Ratios Conjecture \cite{FPS3,W}; in this case one needs to compute averages of ratios of local factors at infinity.
In the current situation, these terms come from a significantly different source, namely from a careful analysis of averages of Bessel functions and Kloosterman sums coming from the Petersson trace formula.  
Independently of the use of different methods, this seems to indicate that a transition in lower-order terms should exist whenever the symmetry type of a family is even or odd orthogonal, or symplectic.  
 
Interestingly, averaging over all even values of the weight $k$, we find that 
$$  \frac{1}{H(K)}\sum_{k\equiv 0 \bmod 2}h\Big(\frac{k-1}{K}\Big)\mathcal D_k(\phi;K^2) = \int_{\mathbb R}  \widehat \phi\cdot \widehat W + \sum_{1 \leq j\leq J} \frac{R_{j,h}\widehat \phi^{(j-1)}(0) }{(\log K)^j}+O_{\phi,h,J}\Big( \frac 1{(\log K)^{J+1}}\Big),  $$
where $\widehat W =\widehat W (O):=\tfrac 12+\delta_0 $ and $H(K):=H^+(K)+H^-(K)$. Hence, as expected we see that there is no transition at $1$ in this mixed signs family (see also \cite[Theorem 1.6]{M}). We should point out that for similar reasons, there is no transition in mixed sign families of holomorphic cusp forms of fixed weight and of large level~\cite{M,RR}.  

\begin{remark}
One can compute explicitly the constants $R_{j,h}$ and $S_{j,h}$ in Theorem \ref{theorem main}. In particular, the first of these are given by 
$$ R_{1,h}:=  1+ \frac{\int_0^{\infty} h\cdot \log }{\int_0^{\infty} h}-\log (4\pi) +\int_{1}^{\infty} \frac{\theta(t)-t}{t^2}dt; $$
$$S_{1,h}:=-\gamma + \frac{\int_0^\infty h \cdot \log}{ \int_0^\infty h}  - \log (4\pi) - \sum_p \frac{\log p}{p(p-1)} ,$$
where $\theta(t):=\sum_{p \leq t }\log p$.
\end{remark}

We now state our results for test functions whose Fourier transform is supported in the interval $(-1,1)$. Under this restriction our estimates are substantially more precise. Indeed, we do not need the GRH assumption, the error term is exponentially small in the weight $k$ and we do not need the average over $k$ (we set $X=k^2$ in $\mathcal D_k(\phi;X)$).

\begin{theorem}
\label{theorem small support}
Let $\phi$ be an even Schwartz test function for which supp$(\widehat \phi ) \subset (-1,1) $. Then the (unaveraged) $1$-level density satisfies the estimate
\begin{multline}
\mathcal D_k(\phi;k^2) = \frac 1{\log (k^2)} \int_{\mathbb R} \Big(  \frac{\Gamma'}{\Gamma} \Big( \frac 14+ \frac{k+ 1}4 +\frac{2\pi i t}{\log (k^2)} \Big)+ \frac{\Gamma'}{\Gamma} \Big( \frac 14+ \frac{k- 1}4 +\frac{2\pi i t}{\log (k^2)} \Big)\Big) \phi(t) dt \\+2\sum_{p} \frac 1p \widehat \phi \Big( \frac{2 \log p}{\log (k^2)} \Big) \frac{\log p}{\log (k^2)}-\widehat \phi(0)  \frac {\log \pi}{\log k}+ O(  k^{\frac 32} 2^{-k }).
\label{equation theorem small support}
\end{multline}
\end{theorem}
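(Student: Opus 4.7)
The plan is to apply the explicit formula for $L(s,f)$ to each individual $f \in B_k$, average over $f \in B_k$ weighted by $\omega_f/\Omega_k$, and exploit that the hypothesis $\mathrm{supp}(\widehat\phi) \subset (-1,1)$ confines every prime-power argument of the resulting Bessel functions to the exponentially-decaying range of $J_{k-1}$. Applying the Weil explicit formula to the completed $L$-function $\Lambda(s,f) = (2\pi)^{-s}\Gamma(s+(k-1)/2)L(s,f)$ yields, for each $f \in B_k$, an identity of the form
\[
  \sum_{\gamma_f}\phi\!\left(\gamma_f \tfrac{\log X}{2\pi}\right) = \mathcal A_k(\phi, X) - \frac{2}{\log X}\sum_p \sum_{\nu\ge 1} \frac{\alpha_f(p)^\nu+\beta_f(p)^\nu}{p^{\nu/2}}\,\widehat\phi\!\left(\tfrac{\nu\log p}{\log X}\right)\log p,
\]
where $\mathcal A_k(\phi, X)$ is the archimedean contribution, independent of $f$. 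A routine manipulation using Legendre's duplication formula $\Gamma(s+(k-1)/2) = 2^{s+(k-1)/2-1}\pi^{-1/2}\Gamma(s/2+(k-1)/4)\Gamma(s/2+(k+1)/4)$ specialised at $X=k^2$ matches $\mathcal A_k(\phi,k^2)$ with the two $\Gamma'/\Gamma$-integrals in \eqref{equation theorem small support}, together with the constant term $-\widehat\phi(0)\log\pi/\log k$ arising from the combination $-\log(2\pi)+\log 2 = -\log\pi$. Averaging against $\omega_f/\Omega_k$ leaves this archimedean piece unchanged.

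For the prime-power tail I would expand each local factor via the Hecke--Chebyshev identities $\alpha_f(p)+\beta_f(p)=\lambda_f(p)$ and, for $\nu\ge 2$, $\alpha_f(p)^\nu+\beta_f(p)^\nu = \lambda_f(p^\nu) - \lambda_f(p^{\nu-2})$ (valid at level $1$ because $\alpha_f(p)\beta_f(p)=1$). The deterministic piece $-\lambda_f(p^0)=-1$ from $\nu=2$, after the overall minus sign of the explicit formula and after averaging, produces exactly the main-term prime sum $+\tfrac{2}{\log k^2}\sum_p \tfrac{\log p}{p}\widehat\phi\!\left(\tfrac{2\log p}{\log k^2}\right)$, up to a multiplicative $O(2^{-k})$ coming from $\Omega_k=1+O(2^{-k})$. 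All remaining pieces reduce to evaluating averages of the shape $\Omega_k^{-1}\sum_f \omega_f\lambda_f(p^j)$ with $j\ge 1$.

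By the Petersson trace formula each such average equals $\delta(1,p^j)=0$ plus the Kloosterman--Bessel tail $2\pi i^{-k}\sum_{c\ge 1} c^{-1}S(1,p^j;c)J_{k-1}(4\pi p^{j/2}/c)$. The strict inclusion $\mathrm{supp}(\widehat\phi)\subset(-1,1)$ provides a fixed $\delta>0$ with $\widehat\phi(\nu\log p/\log X)=0$ unless $p^\nu \le k^{2-2\delta}$, so that $4\pi p^{j/2}/c \le 4\pi k^{1-\delta}$ uniformly in $c\ge 1$ for every $j \le \nu$ that contributes. For $k$ large this argument lies well below the transitional scale $x\asymp k$, and the elementary bound $J_{k-1}(x)\le (x/2)^{k-1}/\Gamma(k)$ combined with Stirling yields $J_{k-1}(4\pi p^{j/2}/c)=O(k^{-\delta k})$ uniformly in $c$. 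Coupled with the Weil bound $|S(1,p^j;c)|\ll \tau(c) c^{1/2}(c,p^j)^{1/2}$ and the rapid decay of $J_{k-1}$ for large $c$, this gives $\sum_f \omega_f \lambda_f(p^j) \ll k^A 2^{-k}$ for some absolute $A$. Summing these exponentially small contributions against $(\log p)/p^{\nu/2}$ over primes $p \le k^{2/\nu}$ and the finitely many relevant $\nu$ then delivers the total error $O(k^{3/2}2^{-k})$.

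The main obstacle is the quantitative bookkeeping in this Petersson step: one must verify that $J_{k-1}$ is genuinely exponentially small uniformly in $c\ge 1$ (and not only in the transition zone $4\pi p^{j/2}/c\approx k$), and carefully track the polynomial-in-$k$ loss coming from summing against $(\log p)/p^{\nu/2}$ and from the Kloosterman sum of modulus $c$. The strict inclusion $\mathrm{supp}(\widehat\phi)\subset(-1,1)$ (rather than $(-2,2)$ as in Theorem~\ref{theorem main}) is precisely what makes this step clean: it forces every Bessel argument well below $k$, so that no appeal to GRH for Dirichlet $L$-functions, nor to the refined orthogonality bound in Proposition~\ref{proposition orthogonality bound}, is required.
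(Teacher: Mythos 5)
Your proposal follows essentially the same route as the paper. The paper's proof of Theorem~\ref{theorem small support} also passes through the Petersson trace formula and exploits the fact that when $\operatorname{supp}(\widehat\phi)\subset(-1,1)$ (so $\sigma<1$, forcing $p^\nu\leq k^{2\sigma}$), the Bessel argument $4\pi\sqrt{mn}/c$ stays uniformly below the turning point of $J_{k-1}$; the elementary bound $J_{k-1}(x)\ll (x/2)^{k-1}/(k-1)!$ then gives super-exponential decay, which is exactly the content of the second estimate \eqref{equation improved bound orthogonality} of Proposition~\ref{proposition orthogonality bound} in the range $mn\leq k^2/(4\pi e)^2$. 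The paper packages this into Proposition~\ref{proposition orthogonality bound} and then cites it twice --- once inside Lemma~\ref{Lemma:prime powers} to discard the $\nu\geq 2$ prime powers with an $O(k^{1/2+\eps}2^{-k})$ error, and once in the proof of Theorem~\ref{theorem small support} to kill the remaining $\nu=1$ sum with an $O(k^{3\sigma/2}2^{-k})$ error --- whereas you re-derive the needed exponential Petersson bound inline from the Bessel estimate and Weil bound. This is a cosmetic difference: when you write that ``no appeal to \dots the refined orthogonality bound in Proposition~\ref{proposition orthogonality bound} is required,'' what is actually true is that only the second, easy half of that proposition (the $S_4$-only regime) is needed, and that is precisely what both you and the paper use; the genuinely refined part of Proposition~\ref{proposition orthogonality bound} (the $k^{1/6}$ term) only matters for Theorem~\ref{theorem main} with support up to $2$. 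The archimedean bookkeeping via the duplication formula and the Hecke--Chebyshev expansion (with the $\nu=2$, $\lambda_f(p^0)=1$ contribution producing the deterministic $\sum_p p^{-1}\widehat\phi(2\log p/\log k^2)$ term) are the same as in the paper's Lemmas~\ref{lemma explicit formula} and \ref{Lemma:prime powers}.
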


\begin{remark}
$ $
\begin{enumerate}

\item We emphasize that Theorem \ref{theorem small support} is unconditional. Moreover, the error term in \eqref{equation theorem small support} is exponentially small, in particular this is significantly more precise than predictions from the Ratios Conjecture \cite{CFZ,CS,M}. This comes from exponential bounds on the Bessel functions occuring in the Petersson trace formula (see \eqref{equation improved bound orthogonality}). 

\item The Katz--Sarnak main term in this case is given by 
$ \widehat \phi(0) + \frac{\phi(0)}2. $
One can extract this term from \eqref{equation theorem small support} by applying 
Lemmas \ref{lemma:gamma factors} and \ref{lemma:Seven}. 

\item An estimate for the $1$-level density $\mathcal D_k(\phi;k^2)$ was previously obtained by Miller~\cite[Lemmas 4.2 and 4.4]{M} with the error term $O_\eps (k^{\frac{\sigma}2-\frac 56+\eps})$, under the same conditions. 

\end{enumerate}

\end{remark}

Lower-order terms in the level aspect were previously studied in~\cite{M,MM,RR}.
In an upcoming paper we will refine the Iwaniec--Luo--Sarnak and Miller--Montague estimates for fixed weight and large level and isolate a transition term of the same type as in Theorem \ref{theorem main}.  

The paper is divided as follows. In Sections \ref{section 2 explicit formula} and \ref{section 3 petersson} we discuss prerequisites, establish \eqref{equation improvement ILS} and discard higher prime powers in the explicit formula. Section \ref{section 4 small support} is dedicated to the proof of Theorem \ref{theorem small support}. Finally, in Section \ref{section 5 average over weight} we apply  estimates on averages of Bessel functions to isolate a transition term, which we carefully evaluate in Section \ref{section 6 transition term}. 

\section*{Acknowledgments}
We would like to thank the Institut français de Suède, the Laboratoire de Mathématiques d'Orsay and the Centro Internazionale per la Ricerca Matematica in Trento for supporting this project and providing excellent research conditions. The first author was supported by a Postdoctoral Fellowship at the University of Ottawa. The second author was supported by an NSERC discovery grant at the University of Ottawa. The third author was supported by a grant from the Swedish Research Council (grant 2016-03759).

\section{Explicit formula}

\label{section 2 explicit formula}

We begin by recalling the explicit formula for holomorphic cusp form $L$-functions in the case where the level equals $1$. 

\begin{lemma}
\label{lemma explicit formula}
Let $\phi$ be an even Schwartz test function. We have the formula

\begin{multline}
\Dstar = -2\widehat{\phi}(0) \frac{\log\pi}{\log X}+ \frac 1{\log X} \int_{\mathbb R} \Big(  \frac{\Gamma'}{\Gamma} \Big( \frac 14+ \frac{k+ 1}4 +\frac{2\pi i t}{\log X} \Big)+ \frac{\Gamma'}{\Gamma} \Big( \frac 14+ \frac{k- 1}4 +\frac{2\pi i t}{\log X} \Big)\Big) \phi(t) dt\\
-\frac 2{\Omega_k}\sum_{f\in B_k} \omega_f \sum_{p,\nu} \frac{\alpha_f^{\nu}(p)+\beta_f^{\nu}(p) }{p^{\frac \nu 2}} \widehat \phi \Big( \frac{\nu \log p}{\log X} \Big) \frac{\log p}{\log X}. \label{Eq:explicit formula}
\end{multline}
Here, $\alpha_f(p), \beta_f(p)$ are the local coefficients of the $L$-function
$$L(s,f)= \prod_{p} \Big( 1-\frac {\alpha_f(p)}{p^s}\Big)^{-1}\Big( 1-\frac {\beta_f(p)}{p^s}\Big)^{-1} \hspace{1.356cm } (\Re(s)>1);$$
 in particular we have that $|\alpha_f(p)|=|\beta_f(p)|=1$.

\end{lemma}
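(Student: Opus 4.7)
The plan is to apply the Riemann--von Mangoldt explicit formula to each cusp form $L$-function $L(s,f)$ for $f\in B_k$ individually, and then average the resulting identities against the weights $\omega_f/\Omega_k$. The inputs for $L(s,f)$ --- analytic continuation to an entire function, the functional equation for the completed $L$-function, and the Euler product on $\Re(s)>1$ --- are all classical in level one. By density I may assume $\widehat\phi$ is compactly supported, which makes $F(s):=\phi((s-\tfrac12)\log X/(2\pi i))$ entire with rapid decay on every vertical line.

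For $f\in B_k$ set $\Lambda(s,f):=(2\pi)^{-s}\Gamma(s+(k-1)/2)L(s,f)$, which satisfies $\Lambda(s,f)=\varepsilon_f\Lambda(1-s,f)$ with $\varepsilon_f=i^k$. Integrating $F(s)\Lambda'(s,f)/\Lambda(s,f)$ around the rectangle $[1-c,c]\times[-T,T]$ with $c>1$ and letting $T\to\infty$, the residues at the non-trivial zeros $\rho=\tfrac12+i\gamma_f$ (counted with multiplicity) contribute $\sum_{\gamma_f}\phi(\gamma_f\log X/(2\pi))$, while the horizontal pieces vanish by the decay of $F$ combined with the polynomial growth of $\Lambda'/\Lambda$ away from its zeros. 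The functional equation together with the evenness of $\phi$ (so $F(1-s)=F(s)$) identifies the integral along $(1-c)$ with the negative of the integral along $(c)$, producing
$$\sum_{\gamma_f}\phi\Bigl(\gamma_f\tfrac{\log X}{2\pi}\Bigr) \;=\; \frac{1}{\pi i}\int_{(c)} F(s)\,\frac{\Lambda'}{\Lambda}(s,f)\,ds.$$

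I then decompose $\Lambda'/\Lambda=\gamma_\infty'/\gamma_\infty+L'/L$ with $\gamma_\infty(s)=(2\pi)^{-s}\Gamma(s+(k-1)/2)$. For the archimedean part, shift the contour to $\Re(s)=1/2$ (no poles are crossed) and change variables so that $F$ becomes $\phi(t)$; the constant $-\log(2\pi)$ contributes $-2\widehat\phi(0)\log(2\pi)/\log X$, and Legendre's duplication formula rewrites $\Gamma'/\Gamma(s+(k-1)/2)$ as the sum of two $\Gamma'/\Gamma$ values at the shifted arguments $\tfrac14+\tfrac{k\pm1}{4}+(\cdot)$, plus an extra constant $\log 2$. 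This $\log 2$ combines with $-\log(2\pi)$ to produce the $-2\widehat\phi(0)\log\pi/\log X$ appearing in the statement.

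For the Dirichlet-series part, on $\Re(s)=c$ the absolutely convergent expansion $-L'/L(s,f)=\sum_{p,\nu}(\alpha_f(p)^\nu+\beta_f(p)^\nu)(\log p)\,p^{-\nu s}$ allows the interchange of sum and integral; writing $\phi(z)=\int\widehat\phi(y)e^{2\pi iyz}\,dy$ and evaluating the resulting Mellin--Fourier kernel by Fourier inversion yields exactly the prime sum in the statement. The main technical care lies in the interchanges and contour manipulations; these are routine once $\widehat\phi$ is compactly supported, since $F$ decays rapidly on vertical strips while $\Lambda'/\Lambda$ has only polynomial growth away from its zeros. Averaging the per-$f$ identity against $\omega_f/\Omega_k$ gives $\mathcal D_k(\phi;X)$ on the left and the stated right-hand side.
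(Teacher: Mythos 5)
Your proof takes a genuinely different route from the paper's. The paper's proof of this lemma is a one-line citation: it quotes the explicit formula for an individual $f$ from \cite[(4.11)]{ILS} and then averages against the weights $\omega_f/\Omega_k$. You instead re-derive the per-$f$ explicit formula from first principles: contour integration of $F\cdot\Lambda'/\Lambda$ over a rectangle, reflection via the functional equation together with evenness of $\phi$, the decomposition $\Lambda'/\Lambda = \gamma_\infty'/\gamma_\infty + L'/L$, Legendre duplication to split $\Gamma'/\Gamma(s+(k-1)/2)$, and Fourier inversion to extract the prime sum. This is the standard derivation behind ILS's formula, and all the structural steps you describe are correct; it makes the argument self-contained at the cost of re-proving a quoted result.

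One small point worth flagging. With $F(s)=\phi\bigl((s-\tfrac12)\tfrac{\log X}{2\pi i}\bigr)$, the substitution on the line $\Re(s)=\tfrac12$ gives the gamma argument $\tfrac k2+\tfrac{2\pi it}{\log X}$, and after halving it via the duplication formula the two shifted arguments come out as $\tfrac14+\tfrac{k\mp1}{4}+\tfrac{\pi it}{\log X}$, i.e.\ with $\pi it/\log X$ rather than the $2\pi it/\log X$ written in the lemma (and in \cite[(4.11)]{ILS}). Your ``$(\cdot)$'' placeholder thus denotes $\pi it/\log X$. This discrepancy is harmless for the paper --- the Stirling estimate in Lemma~\ref{lemma:gamma factors} is insensitive to the coefficient of $it$, since that term is $O(k^{-1+\eps})$ anyway --- but it does mean your derivation proves the corrected version of the display, not literally what is typed. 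It would be good to make the exact shifted argument explicit so the reader can see this, rather than leaving it implicit under ``$(\cdot)$''. Also, the assertion $|\alpha_f(p)|=|\beta_f(p)|=1$ is Deligne's theorem; a one-clause reference would tidy up the statement of the local-factor input, although you do not actually use it anywhere in the derivation.
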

\begin{proof}
For $f \in B_k$, the formula \cite[(4.11)]{ILS} reads
\begin{multline*}
\sum_{\gamma_f} \phi\Big( \gamma_f \frac{\log X}{2\pi} \Big) =\frac 1{\log X} \int_{\mathbb R} \Big( \frac{\Gamma'}{\Gamma} \Big( \frac 14+ \frac{k+ 1}4 +\frac{2\pi i t}{\log X} \Big)+ \frac{\Gamma'}{\Gamma} \Big( \frac 14+ \frac{k- 1}4 +\frac{2\pi i t}{\log X} \Big)\Big) \phi(t) dt\\
-2 \widehat{\phi}(0) \frac{\log\pi}{\log X}-2 \sum_{p,\nu} \frac{\alpha_f^{\nu}(p)+\beta_f^{\nu}(p) }{p^{\frac \nu 2}} \widehat \phi \Big( \frac{\nu \log p}{\log X} \Big) \frac{\log p}{\log X}.
\end{multline*}
Summing over $f\in B_k$ against the weight $\omega_f$ we obtain the desired formula. 
\end{proof}

We now estimate the integral involving the logarithmic derivative of the gamma function in \eqref{Eq:explicit formula}.

\begin{lemma}
\label{lemma:gamma factors}
Let $\eps>0$ and let $\phi$ be an even Schwartz test function. In the range $k\leq X^5$, we have the estimate\footnote{Using more terms in the Stirling approximation, this estimate can be refined to an asymptotic series in descending powers of $k$.\label{footnote}}
$$\frac 1{\log X} \int_{\mathbb R} \Big(  \frac{\Gamma'}{\Gamma} \Big( \frac 14+ \frac{k+ 1}4 +\frac{2\pi i t}{\log X} \Big)+ \frac{\Gamma'}{\Gamma} \Big( \frac 14+ \frac{k- 1}4 +\frac{2\pi i t}{\log X} \Big)\Big) \phi(t) dt = \widehat \phi(0) \Big( \frac{\log (k^2)-\log 16}{\log X}\Big) +O_\eps(k^{-1+\eps}).   $$

\end{lemma}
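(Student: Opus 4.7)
The plan is to apply Stirling's asymptotic for the digamma function $\psi := \Gamma'/\Gamma$, in the form $\psi(s) = \log s + O(|s|^{-1})$, valid uniformly for $|\arg s| \leq \pi - \delta$. Setting $s_\pm(t) := \tfrac{1}{4} + \tfrac{k\pm 1}{4} + \tfrac{2\pi i t}{\log X}$, I observe that $\Re s_\pm(t) \geq k/4$ holds for every $t\in\R$, so the Stirling remainder is uniform and we obtain $\psi(s_\pm(t)) = \log s_\pm(t) + O(1/k)$ uniformly in $t$.

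The next step is to factor $s_\pm(t) = \tfrac{k\pm 1+1}{4}(1 + z_\pm(t))$ with $z_\pm(t) := \tfrac{8\pi i t}{(k\pm 1+1)\log X}$, so that $\log s_+(t)+\log s_-(t) = \log\tfrac{k(k+2)}{16} + \log(1+z_+(t)) + \log(1+z_-(t))$. The $t$-independent piece equals $\log(k^2/16)+O(1/k)$, which after integration against $\phi$ contributes the expected main term $\log(k^2/16)\,\widehat\phi(0) + O(1/k)$.

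For the $t$-dependent remainders I would split the integral at $|t| = T$ for a sufficiently large constant $T$ depending on $\phi$. On the compact range $|t|\leq T$, once $k$ is large enough we have $|z_\pm(t)|\leq 1/2$, so the Taylor expansion $\log(1+z_\pm) = z_\pm + O(|z_\pm|^2)$ applies; the leading piece $z_+(t)+z_-(t)$ is purely imaginary and odd in $t$, hence integrates against the even real Schwartz function $\phi$ to zero, while the quadratic remainder contributes $O_\phi(1/(k\log X)^2)$. On the tail $|t|>T$, Schwartz decay of $\phi$ combined with the trivial bound $|\log s_\pm(t)| \ll \log(k+|t|)+1$ makes the contribution $O_{\phi,N}(T^{-N}\log k)$, which is negligible for $T$ large (and in fact for any polynomial choice like $T = k^{1/2}$).

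Putting everything together and dividing by $\log X$ yields the stated estimate with an error of order $O_\phi(1/(k\log X))$, which under $k\leq X^5$ (so $\log X \geq \tfrac15\log k$) is easily absorbed into $O_\eps(k^{-1+\eps})$. I do not foresee a serious obstacle here; the only technical point worth watching is the uniformity of the Stirling remainder across all $t\in\R$, which is cleanly available because $\Re s_\pm(t) \geq k/4$ regardless of $t$, so the argument of $\psi$ stays in a fixed sector. The refined asymptotic series in descending powers of $k$ mentioned in the footnote is obtained by retaining further terms of Stirling's expansion $\psi(s) = \log s - \tfrac{1}{2s} - \sum_{j\geq 1}\tfrac{B_{2j}}{2js^{2j}}$, and repeating the same expansion argument on each.
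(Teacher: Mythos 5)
Your proof is correct and follows the same overall strategy as the paper's: apply Stirling's approximation $\psi(s)=\log s+O(|s|^{-1})$ uniformly (valid since $\Re s_\pm(t)\geq k/4$), truncate the $t$-integral using Schwartz decay, and approximate the logarithms by their $t$-independent value $\log(k^2/16)$. Two small remarks: (i) your observation that $z_+(t)+z_-(t)$ is odd in $t$ and purely imaginary, hence integrates to zero against the even real $\phi$, is a genuine refinement not used in the paper — the paper instead truncates at $|t|\leq k^{\eps}$ so that $|z_\pm(t)|\ll k^{-1+\eps}/\log X$ and absorbs the first-order term directly into the error, which is cruder but already sufficient for $O_\eps(k^{-1+\eps})$; and (ii) the phrase ``sufficiently large constant $T$'' is inconsistent with the subsequent tail bound $O(T^{-N}\log k)$, which requires $T\to\infty$ with $k$, but you correct this with the choice $T=k^{1/2}$, so no real gap remains.
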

\begin{proof}
Applying Stirling's formula
$$ \frac{\Gamma'}{\Gamma} (z) =\log z + O(|z|^{-1})  $$ 
in the region $\Re(z) >0$, we see that
\begin{align*}
&\frac 1{\log X} \int_{\mathbb R} \Big(  \frac{\Gamma'}{\Gamma} \Big( \frac 14+ \frac{k+ 1}4 +\frac{2\pi i t}{\log X} \Big)+ \frac{\Gamma'}{\Gamma} \Big( \frac 14+ \frac{k- 1}4 +\frac{2\pi i t}{\log X} \Big)\Big) \phi(t) dt  \\
&=\frac 1{\log X} \int_{\mathbb R}\Big(\log \Big( \frac 14+ \frac{k+ 1}4 +\frac{2\pi i t}{\log X}\Big) +\log \Big( \frac 14+ \frac{k- 1}4 +\frac{2\pi i t}{\log X} \Big)\Big) \phi(t) dt  + O(k^{-1})\\
& =  \frac 1{\log X} \int_{-k^{\eps}}^{k^{\eps}}\Big(\log \Big( \frac 14+ \frac{k+ 1}4 +\frac{2\pi i t}{\log X}\Big) +\log \Big( \frac 14+ \frac{k- 1}4 +\frac{2\pi i t}{\log X} \Big)\Big) \phi(t) dt  + O_\eps(k^{-1})\\
& =  \frac 1{\log X} \int_{-k^{\eps}}^{k^{\eps}}\log \Big( \frac{k^2}{16} \Big) \phi(t) dt  + O_\eps(k^{-1+\eps}).
\end{align*} 
The result follows from extending the integral to $\mathbb R$.
\end{proof}

\section{The Petersson trace formula and related estimates}
\label{section 3 petersson}
In order to handle the term involving sums over prime powers in \eqref{Eq:explicit formula} we will apply the Petersson trace formula. 
For $m,n \in \mathbb Z $ and $c\in \mathbb N$ we define the Kloosterman sum
$$ S(m,n;c) := \sum_{\substack{x\bmod c \\ (x,c)=1}} e\Big( \frac{mx+n\overline{x}}{c}\Big).  $$
We will repeatedly use the classical Weil bound (see for instance \cite[Corollary 11.12]{IK})
$$S(m,n;c) \leq \tau(c) (m,n,c)^{\frac 12} c^{\frac 12}.$$

\begin{lemma}
	\label{lemma:peterssonexact}
Let $m,n,k\in \mathbb N$, with $2\mid k$. We have the exact formula	\begin{align}
  \sum_{f\in B_k}  \omega_f \lambda_f(m)\lambda_f(n)= \delta(m,n)+ 2\pi i^k  \sum_{c \geq 1} c^{-1} S(m,n,c) J_{k-1}\Big(\frac{4\pi  \sqrt{mn}}c\Big),
  \label{equation petersson exact}
	\end{align}
	where $J_{k-1}$ is the Bessel function.
\end{lemma}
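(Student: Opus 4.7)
The classical approach is via Poincaré series. I would introduce the weight-$k$, level-$1$ holomorphic Poincaré series
$$P_m(z) := \sum_{\gamma \in \Gamma_\infty \backslash SL_2(\Z)} j(\gamma,z)^{-k}\, e(m\gamma z),$$
with $j(\gamma,z) = cz+d$ and $\Gamma_\infty$ the stabilizer of $\infty$, where $e(u) = e^{2\pi i u}$. The plan is to express $P_m$ in two different ways and compare. For the Fourier expansion at $\infty$, I would separate the trivial coset (contributing $e(mz)$) and organize the remaining cosets by the lower-left entry $c \geq 1$; the inner sum over $d \pmod c$ with $(d,c)=1$ assembles into the Kloosterman sum $S(m,n;c)$, while the remaining integral against $e(-nx)$ is a classical Bessel integral evaluating to a constant multiple of $c^{-1}(n/m)^{(k-1)/2} J_{k-1}(4\pi\sqrt{mn}/c)$. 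The outcome is an expansion $P_m(z) = e(mz) + \sum_{n\geq 1} c_n e(nz)$ with
$$c_n = 2\pi i^{-k} (n/m)^{(k-1)/2} \sum_{c\geq 1} \frac{S(m,n;c)}{c}\, J_{k-1}\!\left(\frac{4\pi\sqrt{mn}}{c}\right).$$

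Next I would compute the Petersson inner product $(P_m, f)$ for $f\in B_k$ by the standard unfolding trick: the sum over $\Gamma_\infty \backslash SL_2(\Z)$-cosets unfolds the integral over $SL_2(\Z)\backslash \mathbb H$ into an integral over the strip $\{0 \leq \Re z \leq 1,\, \Im z > 0\}$. Inserting the Fourier expansion $f(z)=\sum_n a_f(n) n^{(k-1)/2} e(nz)$, the integration in $x = \Re z$ isolates the $m$-th coefficient and the $y = \Im z$ integration produces the standard gamma factor, yielding
$$(P_m, f) = \frac{\Gamma(k-1)}{(4\pi m)^{k-1}}\, m^{(k-1)/2}\, \overline{a_f(m)}.$$

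Finally, since $P_m \in H_k$ and $B_k$ is an orthogonal basis, I would expand $P_m = \sum_{f\in B_k} (f,f)^{-1} (P_m,f) f$ and equate the $n$-th Fourier coefficient of this identity with the Bessel expression for $c_n$ above. Substituting the inner product formula, using $a_f(n) = \lambda_f(n) \in \R$ and the definition $\omega_f = \Gamma(k-1)/((4\pi)^{k-1}(f,f))$, the factors $(4\pi m)^{-(k-1)}\, m^{(k-1)/2}\, n^{(k-1)/2}$ cancel cleanly with the $(n/m)^{(k-1)/2}$ factor on the Bessel side, leaving precisely the stated identity (using $i^{-k}=i^k$ since $k$ is even). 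The most delicate step is the evaluation of the Bessel integral and the careful bookkeeping of normalization constants so that the harmonic weight $\omega_f$ emerges in its stated form; this is classical material, and full details can be found in \cite[Chapter 14]{IK}.
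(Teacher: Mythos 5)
Your proposal is the standard Poincar\'e-series proof of the Petersson trace formula, and it is correct: the unfolding computation of $(P_m,f)$, the Fourier--Bessel expansion of $P_m$, and the bookkeeping of the powers of $m$, $n$ and $4\pi$ all check out, and the cancellation you describe does indeed leave exactly the stated identity with $i^{-k}=i^{k}$ for even $k$. The paper itself does not reprove this lemma; it simply cites Petersson's original article and \cite[Proposition 14.5]{IK}, and the argument given there is precisely the one you sketch, so your route coincides with the intended one.
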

\begin{proof}
See \cite{P2}, \cite[Proposition 14.5]{IK}.
\end{proof}

We recall the following bound on the Bessel function.  

\begin{lemma}
	\label{lemma:besselbounds}
Let $k\in \mathbb N$.	We have the bound
	$$ J_{k-1}(x)\ll \min\Big( \frac 1{(k-1)!} \Big(\frac{x}{2}\Big)^{k-1}, x^{-\frac 14} (|x-k+1|+k^{\frac 13})^{-\frac 14}\Big).  $$
\end{lemma}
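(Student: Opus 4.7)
The plan is to prove the two bounds inside the minimum independently. Since taking the minimum of two valid upper bounds is itself a valid upper bound, it is enough to show both bounds hold individually (at least in the regime where each is the smaller one).

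For the first bound, I start from the standard power series
$$ J_{k-1}(x) = \sum_{m=0}^{\infty} \frac{(-1)^m}{m!\,(m+k-1)!} \Big(\frac{x}{2}\Big)^{2m+k-1}. $$
The $m$-th term in absolute value equals the leading term multiplied by $(x/2)^{2m}/\big(m!\,k(k+1)\cdots(k+m-1)\big)$. As long as $(x/2)^2 \le k/2$, consecutive terms decrease by a factor of at least two, so the whole series is bounded by twice the leading term, which gives the first bound. When $(x/2)^2 > k/2$, the second bound is already smaller than the first up to a constant (since then $x \gg k^{1/2}$ and the second bound grows slower with $x$), so the minimum on the right-hand side is controlled by the second bound alone and the first bound need not be verified in that range.

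For the second bound, the standard route is to combine three regimes of the Bessel function. In the oscillatory regime $x \ge k$ well separated from the transition, Stirling-type asymptotics give $J_{k-1}(x) = \sqrt{2/(\pi x)}\cos(x-(k-1)\pi/2-\pi/4) + O(x^{-3/2})$, hence $|J_{k-1}(x)| \ll x^{-1/2}$, which matches $x^{-1/4}(|x-k+1|+k^{1/3})^{-1/4}$ when $|x-k+1| \asymp x$. In the transition region $|x-k+1| \lesssim k^{1/3}$, Olver's uniform asymptotic expansion approximates $J_{k-1}$ by an Airy function of the appropriate scaled variable, giving the envelope bound $|J_{k-1}(x)| \ll k^{-1/3}$, which is exactly $x^{-1/4}k^{-1/3 \cdot 1/4} \cdot k^{1/4}$ times the required shape when $x \asymp k$. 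Finally, for $x$ well below $k-1$ the Bessel function is exponentially small and the inequality is trivial. Stitching these three estimates together produces the claimed uniform bound.

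The main obstacle is the matching between the transition region and the oscillatory region; the cleanest way to handle this is to invoke Olver's uniform asymptotic expansion in terms of Airy functions (with the standard change of variable $\zeta$), which gives a single expression valid across both regimes together with explicit error control. This is a well-documented computation (see Watson's treatise on Bessel functions, chapter 8, or Olver's monograph) and the bound stated in the lemma is standard; in particular the same estimate is used as Lemma 2.1 of Iwaniec--Luo--Sarnak \cite{ILS}, so one may either cite it directly or reproduce the short derivation just sketched.
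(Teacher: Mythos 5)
The paper itself does not prove this lemma: it cites \cite[(2.11'), (2.11'')]{ILS}, which in turn rely on explicit formulas in Watson \cite{Wa} (for $x \ge k^2$) and on Krasikov's uniform bound \cite[Theorem 2]{K} (for the remaining range). Your plan to reconstruct the second bound from the Debye/Airy (Olver) uniform asymptotics is essentially what lies behind those references, and that part of the sketch is sound in outline: the oscillatory regime gives $(x^2-(k-1)^2)^{-1/4}$, the turning-point regime gives the Airy envelope $\asymp k^{-1/3}$, and the two patch together to give the shape $x^{-1/4}(|x-k+1|+k^{1/3})^{-1/4}$.

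However, there is a genuine gap in your treatment of the \emph{first} bound. Your power-series argument (alternating series with decreasing terms) only proves $|J_{k-1}(x)| \le \frac{2}{(k-1)!}(x/2)^{k-1}$ in the range $x \lesssim \sqrt{k}$. You then claim that for $x \gg \sqrt{k}$ the second bound is the smaller of the two, so the first bound need not be checked there. This is false. For $\sqrt{k} \lesssim x \le ck$ with $c < 2/e$, Stirling gives
$$\frac{1}{(k-1)!}\Big(\frac{x}{2}\Big)^{k-1} \asymp \frac{1}{\sqrt{k}}\Big(\frac{ex}{2(k-1)}\Big)^{k-1},$$
which is \emph{exponentially} small in $k$, while the second bound $x^{-1/4}(|x-k+1|+k^{1/3})^{-1/4}$ is only of size about $k^{-1/2}$ in that range. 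So the minimum in the lemma is the first bound throughout $\sqrt{k}\lesssim x\lesssim ck$, and your argument has not established $|J_{k-1}(x)|\ll \frac{1}{(k-1)!}(x/2)^{k-1}$ there.

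The clean repair is to abandon the power series and use Poisson's integral representation
$$J_\nu(x) = \frac{(x/2)^\nu}{\Gamma(\nu+\tfrac12)\Gamma(\tfrac12)}\int_{-1}^{1}(1-t^2)^{\nu-\frac12}e^{ixt}\,dt \qquad (\nu>-\tfrac12),$$
which immediately yields $|J_\nu(x)| \le (x/2)^\nu/\Gamma(\nu+1)$ for \emph{all} $x\ge 0$, with no restriction on the size of $x$. With that, the first bound holds everywhere, the second bound holds everywhere (by your Debye/Airy analysis), and taking the minimum is then legitimate with no range-splitting bookkeeping needed. Alternatively, one could match the exponential decay rate of $J_{k-1}(x)$ in the subcritical regime $x<k-k^{1/3}$ (again via uniform asymptotics) against the Stirling expansion of the first bound, but the Poisson integral is far simpler.
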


\begin{proof}
See \cite[(2.11'); (2.11'')]{ILS}, which for the range $x\geq k^2$ follows from \cite{Wa}, specifically equations (1) p.49, (2) p.77, (6) p.78, (1), (3) p.199, (1) p.202, (4) p.250, (5) p.252, and for the remaining range follows from \cite[Theorem 2]{K}.
\end{proof}

In \cite[Chapter 2]{ILS}, this bound is shown to imply the estimate	$$  \sum_{f\in B_k}  \omega_f \lambda_f(m)\lambda_f(n)=\delta(m,n) + O(k^{-\frac 56} (mn)^{\frac 14}  \tau_3((m,n)) \log(2mn)), $$
which is non-trivial in the range $mn \leq k^{\frac{10}3 -\eps}$.
By a more careful decomposition of the the sum over $c$ in \eqref{equation petersson exact}, we establish a more precise estimate which is non-trivial in the wider range $mn \leq k^{4-\eps}$.
\begin{proposition}
\label{proposition orthogonality bound}
Let $\eps>0$, and let $m,n,k\in \mathbb N$, with $2\mid k$. We have the estimate
$$  \sum_{f\in B_k}  \omega_f \lambda_f(m)\lambda_f(n)= \delta(m,n)+ O_\eps\Big(  \frac{(m,n)^{\frac 12}(mn)^{\frac 14+\eps}}{k} +\frac{ k^{\frac 16}(m,n)^{\frac 12}}{(mn)^{\frac 14-\eps}}\Big). $$
In the range $mn \leq   k^2/(4\pi e)^2$, we have the exponentially precise estimate
\begin{equation}
 \sum_{f\in B_k}  \omega_f \lambda_f(m)\lambda_f(n)= \delta(m,n)+ O\Big( 2^{-k}(mn)^{\frac 14} \log(2mn) \prod_{p\mid (m,n)} \Big( 1+\frac 3{\sqrt p} \Big) \Big). 
 \label{equation improved bound orthogonality}
\end{equation} 
\end{proposition}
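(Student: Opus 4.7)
The plan is to substitute the Petersson trace formula of Lemma \ref{lemma:peterssonexact} into the left-hand side, apply the Weil bound to the Kloosterman sums and then carefully dissect the resulting series over $c$ according to where the Bessel argument $y_c := 4\pi\sqrt{mn}/c$ sits relative to $k$. After Weil, the error term reduces to
$$\sum_{c \geq 1} \tau(c)\, ((m,n),c)^{1/2}\, c^{-1/2}\, |J_{k-1}(y_c)|,$$
which I bound using the two estimates of Lemma \ref{lemma:besselbounds}, selecting the smaller of them in each range of $c$.

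For the general estimate I split the sum into three regimes. When $c$ is large enough that $y_c \leq 2(k-1)/e$ (equivalently $c \gtrsim \sqrt{mn}/k$), the factorial bound of Lemma \ref{lemma:besselbounds} together with Stirling yields super-exponential decay in $c$, so this range contributes negligibly. In the transition window $|y_c - k+1| \leq k^{1/3}$, which corresponds to an interval of $c$'s of length $O(1 + \sqrt{mn}/k^{5/3})$ centred around $c_* \asymp \sqrt{mn}/k$, I use the uniform bound $|J_{k-1}(y_c)| \ll k^{-1/3}$; the extreme case where the window contains just one integer produces a contribution of order $k^{-1/3} \cdot (k/\sqrt{mn})^{1/2} \cdot (m,n)^{1/2} (mn)^\eps = k^{1/6+\eps}(m,n)^{1/2}(mn)^{-1/4}$, matching the second error term. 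In the oscillatory regime $|y_c - k+1| > k^{1/3}$ with $c$ small, a dyadic summation over the scale of $|y_c - k+1|$ using the bound $y_c^{-1/4}|y_c - k+1|^{-1/4}$ shows that the dominant contribution comes from $|y_c - k + 1| \asymp k$, where the density of integer $c$'s is $\asymp \sqrt{mn}/k$; this yields the first error term $(m,n)^{1/2}(mn)^{1/4+\eps}/k$.

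For the exponentially precise estimate under $mn \leq k^2/(4\pi e)^2$, one has $y_c \leq 4\pi\sqrt{mn} \leq k/e$ for every $c \geq 1$, so only the factorial bound is relevant. A refined Stirling estimate
$$\frac{1}{(k-1)!}\Big(\frac{y_c}{2}\Big)^{k-1} \ll \frac{1}{\sqrt k}\Big(\frac{e\,y_c}{2(k-1)}\Big)^{k-1},$$
together with $e\,y_c/(2(k-1)) \leq (1/(2c))(1+O(1/k))$, turns the $c$-sum into a rapidly convergent geometric series dominated by the $c = 1$ term, of size $O(2^{-k}/\sqrt k)$; absorbing the accumulated divisor weights $\tau(c)$ produces the claimed $(mn)^{1/4}\log(2mn)$ dependence. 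The Euler product $\prod_{p\mid(m,n)}(1+3/\sqrt p)$ emerges from tracking $((m,n), c)^{1/2}$ multiplicatively across primes, rather than applying the crude bound $((m,n), c) \leq (m,n)$.

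The principal obstacle is the transition regime: obtaining the exponent $k^{1/6}$ requires using the uniform $k^{-1/3}$ bound precisely on $|y_c - k+1| \leq k^{1/3}$ and switching immediately to the oscillatory bound just outside, together with a careful dyadic decomposition. A secondary technical point is the multiplicative bookkeeping of the gcd factor $((m,n), c)^{1/2}$, needed to distil the clean Euler product refinement in \eqref{equation improved bound orthogonality} rather than the cruder $\tau_3((m,n))$ bound of \cite[Corollary 2.2]{ILS}.
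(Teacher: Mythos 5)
Your proposal follows essentially the same route as the paper's own proof: substitute the Petersson trace formula, bound the Kloosterman sums by Weil, and split the $c$-sum into a factorial-bound regime, a transition window where $|y_c-(k-1)|\leq k^{1/3}$, and an oscillatory regime, with the $k^{1/6}$ exponent coming from the single-term transition case, the first error term from summing the oscillatory regime (the paper's change of variable $b=\lfloor 4\pi\sqrt{mn}/(k-1)\rfloor-c$ plays the role of your dyadic decomposition), and the exponentially precise estimate from Stirling plus a multiplicative treatment of $(m,n,c)$. Two details to tighten when you write this up: (i) your cutoff $y_c\leq 2(k-1)/e$ for the factorial regime is too generous, since at that boundary the Stirling quotient $\left(e\,y_c/(2(k-1))\right)^{k-1}$ is $1$ and yields no decay at all --- the paper cuts at $c\geq 4e\pi\sqrt{mn}/k$, i.e.\ $y_c\leq k/e$, and you do use the equivalent $4\pi\sqrt{mn}\leq k/e$ in the exponential case, so this only affects your general split; (ii) the factor $(mn)^{1/4}\log(2mn)$ in \eqref{equation improved bound orthogonality} is not produced by ``accumulated divisor weights'' --- when $mn\leq k^2/(4\pi e)^2$ the $S_4$ sum is genuinely $O\bigl(2^{-k}\prod_{p\mid(m,n)}(1+3/\sqrt p)\bigr)$ with no power of $mn$, and that extra factor is merely slack in the paper's single bound for $S_4$ that must also cover $mn>k^2/(4\pi e)^2$ in the proof of the first estimate.
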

\begin{proof}
We bound the rightmost term in the statement of Lemma \ref{lemma:peterssonexact} by combining the Weil bound with Lemma \ref{lemma:besselbounds}, as follows:
\begin{align*}
&\sum_{c \geq 1} c^{-1} S(m,n,c) J_{k-1}\Big(\frac{4\pi  \sqrt{mn}}c\Big) \ll \
  \sum_{c \leq \frac{4\pi\sqrt{mn}}{k-1} -\frac{4\pi\sqrt{mn}}{k^{5/3}}}  c^{-\frac 14}\tau(c)\frac{(m,n,c)^{\frac 12}}{(mn)^{\frac 18}} \Big|\frac{4\pi \sqrt{mn}}c-k+1\Big|^{-\frac 14} \\&+  \sum_{\frac{4\pi\sqrt{mn}}{k-1} -\frac{4\pi\sqrt{mn}}{k^{5/3}} < c < \frac{4\pi\sqrt{mn}}{k-1} +\frac{4\pi\sqrt{mn}}{k^{5/3}}} c^{-\frac 14}\tau(c)\frac{(m,n,c)^{\frac 12}}{(mn)^{\frac 18}}k^{-\frac 1{12}} \\&+  \sum_{ \frac{4\pi\sqrt{mn}}{k-1} +\frac{4\pi\sqrt{mn}}{k^{5/3}}\leq c < \frac{4e\pi \sqrt{mn}}{k}} c^{-\frac 14}\tau(c)\frac{(m,n,c)^{\frac 12}}{(mn)^{\frac 18}} \Big|\frac{4\pi \sqrt{mn}}c-k+1\Big|^{-\frac 14} \\
  &+ \sum_{c \geq \frac{4e\pi \sqrt{mn}}{k}} (m,n,c)^{\frac 12} \tau(c)c^{-\frac 12}\frac 1{(k-1)!} \Big(\frac{2\pi\sqrt{mn}}{c}\Big)^{k-1}=S_1+S_2+S_3+S_4.
\end{align*}
We first bound $S_4$. To do so, note that
\begin{align*}
S_4&\ll  \frac 1{(k-1)!}  (2\pi\sqrt{mn})^{k-1}\sum_{d\mid (m,n) } d^{\frac 12}\sum_{\substack{c \geq \frac{4e\pi \sqrt{mn}}{k} \\ (c,(m,n))=d}}  \tau(c)c^{-k+\frac 12} \\
&\leq \frac 1{(k-1)!}  (2\pi\sqrt{mn})^{k-1}\sum_{d\mid (m,n) } \tau(d) d^{-k+1}\sum_{\substack{f \geq \frac{4e\pi \sqrt{mn}}{dk} \\ (f,(m,n)/d)=1}}  \tau(f)f^{-k+\frac 12}\\
&\ll  2^{-k} (mn)^{\frac 14} \log(2mn)  \prod_{p\mid (m,n)} \Big( 1+\frac 3{\sqrt p} \Big),
\end{align*}
by Stirling's approximation. Note that $S_1,S_2$ and $S_3$ are all empty whenever $mn\leq  k^2/(4\pi e)^2$ and hence \eqref{equation improved bound orthogonality} follows. 

We now assume that $mn > k^2/(4\pi e)^2$. A computation similar to the one above shows that
$$ S_2 \ll_\eps k^{-\frac 32} (mn)^{\frac 14+\eps} +(mn)^{-\frac 14+\eps} k^{\frac 16}(m,n)^{\frac 12} $$
(the second term accounts for the possibility that the sum contains only one term). 

As for $S_1$, we compute that
\begin{align*}
S_1 &\ll  \sum_{c \leq \frac{2\pi\sqrt{mn}}{k-1}}  \tau(c)\frac{(m,n,c)^{\frac 12}}{(mn)^{\frac 14}} + \sum_{\frac{2\pi\sqrt{mn}}{k-1}< c \leq \frac{4\pi\sqrt{mn}}{k-1} -\frac{4\pi\sqrt{mn}}{k^{5/3}}}  \tau(c)\frac{(m,n,c)^{\frac 12}}{(mn)^{\frac 18}} \Big|4\pi \sqrt{mn}-c(k-1)\Big|^{-\frac 14}\\
&\ll_\eps \frac{(mn)^{\frac 14} \log(2mn)}k \prod_{p\mid (m,n)} \Big(1+\frac 3{\sqrt p} \Big)+ (mn)^{-\frac 14+\eps} k^{\frac 16}(m,n)^{\frac 12}\\
&
\hspace{1cm}+ \sum_{\frac{2\pi\sqrt{mn}}{k-1}< c \leq \lfloor \frac{4\pi\sqrt{mn}}{k-1}\rfloor  -\frac{4\pi\sqrt{mn}}{k^{5/3}}}  \tau(c)\frac{(m,n,c)^{\frac 12}}{(mn)^{\frac 18}} \Big|4\pi \sqrt{mn}-c(k-1)\Big|^{-\frac 14}.
\end{align*}
Making the change of variables $b=\lfloor \frac{4\pi\sqrt{mn}}{k-1}\rfloor -c$, we see that the sum over $c$ is 
$$  \ll_\eps   (m,n)^{\frac 12}(mn)^{-\frac 18+\eps}\sum_{ \frac{4\pi\sqrt{mn}}{k^{5/3}} \leq b < \frac{2\pi\sqrt{mn}}{k-1} }  |b(k-1) |^{-\frac 14}\ll k^{-1}(mn)^{\frac 14+\eps} (m,n)^{\frac 12}. $$

In a similar way we see that $S_3\ll_\eps k^{-1}(mn)^{\frac 14+\eps} (m,n)^{\frac 12}+ (mn)^{-\frac 14+\eps} k^{\frac 16}(m,n)^{\frac 12} $, and the proof is finished.
\end{proof}

In the next lemma we apply Proposition \ref{proposition orthogonality bound} in order to discard higher prime powers in the explicit formula \eqref{Eq:explicit formula}.

\begin{lemma}\label{Lemma:prime powers}
	Assume that $k\in 2\mathbb N$,  $X \in \mathbb R_{\geq 2}$ and the even Schwartz test function $\phi$ are such that $X^{\sigma} < k^4$, where $\sigma := $sup$($supp$(\widehat \phi))$. Then we have the following estimate on the $1$-level density:
\begin{multline}
\Dstar = \widehat \phi(0) \Big( \frac{\log (k^2)-\log (16\pi^2)}{\log X}\Big) 
+2\sum_{p} \frac 1p \widehat \phi \Big( \frac{2 \log p}{\log X} \Big) \frac{\log p}{\log X}\\-2\sum_{f\in B_k} \omega_f \sum_{p} \frac{\lambda_f(p)}{p^{\frac 12}} \widehat \phi \Big( \frac{\log p}{\log X} \Big) \frac{\log p}{\log X}+O\Big(  \frac { X^{\frac{\sigma}4+\eps}}k +\frac 1{k^{\frac 13-\eps}}\Big). \label{Eq:explicit formula without prime powers}
\end{multline}
	Assuming the stronger condition $X^{\sigma}<(k/4\pi e)^2$, we have the more precise estimate
	\begin{multline}
\mathcal D_k(\phi;X) =\frac 1{\log X} \int_{\mathbb R} \Big(  \frac{\Gamma'}{\Gamma} \Big( \frac 14+ \frac{k+ 1}4 +\frac{2\pi i t}{\log X} \Big)+ \frac{\Gamma'}{\Gamma} \Big( \frac 14+ \frac{k- 1}4 +\frac{2\pi i t}{\log X} \Big)\Big) \phi(t) dt \\-2\widehat \phi(0)\frac {\log \pi}{\log X}   +2\sum_{p} \frac 1p \widehat \phi \Big( \frac{2 \log p}{\log X} \Big) \frac{\log p}{\log X}-2\sum_{f\in B_k} \omega_f \sum_{p} \frac{\lambda_f(p)}{p^{\frac 12}} \widehat \phi \Big( \frac{\log p}{\log X} \Big) \frac{\log p}{\log X}+O_\eps\Big(\frac{k^{\frac 12+\eps}}{2^{k}}\Big).
\label{eq:explicit formula without prime powers small support}
\end{multline} 
\end{lemma}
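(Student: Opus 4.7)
The plan is to feed the explicit formula of Lemma~\ref{lemma explicit formula} into Lemma~\ref{lemma:gamma factors} to reshape the archimedean integral, then decompose the prime-power sum into the three pieces $\nu=1$, $\nu=2$, and $\nu\geq 3$, using the Hecke recursion and Proposition~\ref{proposition orthogonality bound} on the last two. The hypothesis $X^\sigma<k^4$ places us in the range of Lemma~\ref{lemma:gamma factors}, which replaces the $\Gamma'/\Gamma$ integral in \eqref{Eq:explicit formula} by $\widehat\phi(0)(\log k^2-\log 16)/\log X + O_\eps(k^{-1+\eps})$. Combined with the $-2\widehat\phi(0)\log\pi/\log X$ term of \eqref{Eq:explicit formula}, this produces the first summand of \eqref{Eq:explicit formula without prime powers}. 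The $\nu=1$ part is already the last sum in \eqref{Eq:explicit formula without prime powers}, up to replacing the prefactor $1/\Omega_k=1+O(2^{-k})$ by $1$, which costs at most $O(2^{-k}X^{\sigma/2})$ and is absorbed into the error. For $\nu\geq 2$, the level-one identity $\alpha_f(p)\beta_f(p)=1$ and the Hecke recursion yield
$$\alpha_f^\nu(p)+\beta_f^\nu(p)=\lambda_f(p^\nu)-\lambda_f(p^{\nu-2})\qquad(\nu\geq 2),$$
with the convention $\lambda_f(p^0)=\lambda_f(1)=1$. The $-\lambda_f(1)$ piece from $\nu=2$ survives the weighted sum $\sum_f\omega_f=\Omega_k$ and reproduces exactly the term $+2\sum_p p^{-1}\widehat\phi(2\log p/\log X)\log p/\log X$ present in \eqref{Eq:explicit formula without prime powers}.

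All the remaining contributions with $\nu\geq 2$ are of the form $(1/\Omega_k)\sum_f\omega_f\lambda_f(p^\nu)$, and Proposition~\ref{proposition orthogonality bound} applies with $m=1$, $n=p^\nu$. I split the $(p,\nu)$-range at $p^\nu=k^2/(4\pi e)^2$: below this threshold the exponential bound \eqref{equation improved bound orthogonality} renders the contribution of order $O(2^{-k})$ times a polynomial in $k$ and $X$, which is negligible; above this threshold the general bound of Proposition~\ref{proposition orthogonality bound} yields $O_\eps(p^{\nu/4+\eps}/k + k^{1/6}p^{-\nu/4+\eps})$. Weighting by $p^{-\nu/2}\log p/\log X$ and summing over $p^\nu\leq X^\sigma$, the dominant slice is $\nu=2$: the first piece contributes $O(X^{\sigma/4+\eps}/k)$ (from $\sum_{p\leq X^{\sigma/2}}p^{-1/2+\eps}$) while the second piece contributes $O(k^{1/6}(k/(4\pi e))^{-1/2+\eps})=O(k^{-1/3+\eps})$. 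Each slice with $\nu\geq 3$ is strictly smaller, thanks to the faster decay of $p^{-\nu/2}$ combined with the shorter range $p\leq X^{\sigma/\nu}$. This establishes \eqref{Eq:explicit formula without prime powers}.

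The sharper estimate \eqref{eq:explicit formula without prime powers small support} follows from the same decomposition: the stronger hypothesis $X^\sigma<(k/(4\pi e))^2$ forces every prime power $p^\nu$ with $\nu\geq 2$ appearing in the sum to satisfy $p^\nu\leq (k/(4\pi e))^2$, so that the exponential bound \eqref{equation improved bound orthogonality} applies throughout; the resulting total error from $\nu\geq 2$ is dominated by $\nu=2$ and bounded by $\sum_{p\leq X^{\sigma/2}}2^{-k}p^{-1/2+\eps}\log^2 p/\log X\ll_\eps k^{1/2+\eps}2^{-k}$. Here the $\Gamma'/\Gamma$ integral is kept intact, without invoking Lemma~\ref{lemma:gamma factors}. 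The main technical difficulty, and really the only delicate point, is the careful bookkeeping around the split $p^\nu\asymp k^2$: it is precisely the matching between this split and the two regimes of Proposition~\ref{proposition orthogonality bound} that makes the single slice $\nu=2$ responsible for both error shapes appearing in \eqref{Eq:explicit formula without prime powers}.
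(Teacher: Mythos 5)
Your proposal follows essentially the same route as the paper: apply the explicit formula, use the Hecke recursion $\alpha_f^\nu(p)+\beta_f^\nu(p)=\lambda_f(p^\nu)-\lambda_f(p^{\nu-2})$ for $\nu\geq 2$, isolate the $\nu=2$ diagonal piece $\lambda_f(1)$ as the secondary prime sum, and bound the rest via Proposition~\ref{proposition orthogonality bound} with a split at the threshold $p^\nu \asymp (k/4\pi e)^2$. Two points, however, are glossed over in a way that leaves small gaps.

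First, after extracting the $-\lambda_f(1)$ piece from $\nu=2$, the Hecke identity still leaves the terms $-\lambda_f(p^{\nu-2})$ for all $\nu\geq 3$, and these are \emph{not} ``of the form $\sum_f\omega_f\lambda_f(p^\nu)$'' as you assert; they involve the smaller argument $p^{\nu-2}$ and carry the weight $p^{-\nu/2}$. They are genuinely a separate sum, which the paper bounds independently (yielding $\ll 2^{-k}+I_{[X^\sigma>(k/4\pi e)^2]}k^{-1/3+\eps}$). The argument is easier than for $\lambda_f(p^\nu)$, but it still needs to appear; your sentence ``each slice with $\nu\geq 3$ is strictly smaller'' refers only to the $\lambda_f(p^\nu)$ side and silently drops the $\lambda_f(p^{\nu-2})$ side.

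Second, the stated normalization cost ``$O(2^{-k}X^{\sigma/2})$'' for replacing $1/\Omega_k$ by $1$ in the $\nu=1$ sum is obtained via the trivial bound $|\lambda_f(p)|\leq 2$, and under the stronger hypothesis $X^\sigma<(k/4\pi e)^2$ it is of size $\asymp k\,2^{-k}$, which does \emph{not} fit inside the claimed error $O_\eps(k^{1/2+\eps}2^{-k})$ in \eqref{eq:explicit formula without prime powers small support}. To absorb it you should instead apply Proposition~\ref{proposition orthogonality bound} to $\sum_f\omega_f\lambda_f(p)$ itself (giving $\ll 2^{-k}p^{1/4+\eps}$ term-by-term), so that the renormalization error picks up a second factor of $2^{-k}$ and is, in fact, $O(4^{-k}k^{3/2+\eps})$. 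Both issues are cosmetic rather than structural, but they need to be addressed for the proof to close cleanly.
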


\begin{proof}
	The goal of this proof is to estimate the terms $p,\nu \geq 2$ in \eqref{Eq:explicit formula}. By the Hecke relations, the sum of those terms is equal to
	$$-\frac 2{\Omega_k}\sum_{f\in B_k} \omega_f \sum_{p,\nu \geq 2} \frac{ \lambda_f(p^{\nu})-\lambda_f(p^{\nu-2}) }{p^{\frac \nu 2}} \widehat \phi \Big( \frac{\nu \log p}{\log X} \Big) \frac{\log p}{\log X}.
	 $$ 
	 From Proposition \ref{proposition orthogonality bound}  and \eqref{equation total weight}, we see that
	
	\begin{align*}
	-&\frac 2{\Omega_k}\sum_{f\in B_k} \omega_f \sum_{p,\nu \geq 2} \frac{ \lambda_f(p^{\nu})}{p^{\frac \nu 2}} \widehat \phi \Big( \frac{\nu \log p}{\log X} \Big) \frac{\log p}{\log X} \\
	& \ll_\eps 2^{-k}\sum_{ \substack{p,\nu \geq 2 \\ p \leq \min(X^{\sigma / \nu  },( k/4\pi e)^{2/\nu}) }}    p^{-\frac \nu 4+\eps}      +   \sum_{ \substack{p,\nu \geq 2 \\ \min(X^{\sigma / \nu  },( k/4\pi e)^{2/\nu}) < p \leq X^{\sigma/\nu}}} p^{-\frac \nu 2+\eps}(k^{-1} p^{\frac \nu 4} +k^{\frac 16} p^{-\frac \nu 4}  )  \\
	&\ll_\eps k^{\frac 12+\eps} 2^{-k}+ I_{[X^{\sigma} > (k/4\pi e)^2]} (  k^{-1}  X^{\frac \sigma 4+\eps} +k^{-\frac 13+\eps}).
	\end{align*}
	 Similarly,
		\begin{align*}
	&\frac 2{\Omega_k}\sum_{f\in B_k} \omega_f \sum_{\substack{p\\ \nu \geq 3}} \frac{ \lambda_f(p^{\nu-2})}{p^{\frac \nu 2}} \widehat \phi \Big( \frac{\nu \log p}{\log X} \Big) \frac{\log p}{\log X}  \ll   2^{-k}+I_{[X^{\sigma} > (k/4\pi e)^2]}k^{-\frac 13+\eps}.  
	\end{align*}
	 The only terms left are 
	 		\begin{align*}
	 &\frac 2{\Omega_k}\sum_{f\in B_k} \omega_f \sum_{\substack{p}} \frac{ \lambda_f(1)}{p} \widehat \phi \Big( \frac{2 \log p}{\log X} \Big) \frac{\log p}{\log X}  =2\sum_{\substack{p}} \frac{ 1}{p} \widehat \phi \Big( \frac{2 \log p}{\log X} \Big) \frac{\log p}{\log X}.  
	 \end{align*}
	 We conclude the proof by applying Lemmas \ref{lemma explicit formula} and \ref{lemma:gamma factors}, and \eqref{equation total weight}.
\end{proof}

\section{$1$-level density: Unconditional results}
\label{section 4 small support}
In this section we evaluate the $1$-level density $\Dstar$ for test functions satisfying sup(supp($\phi$)) < 1, unconditionally. We begin by asymptotically evaluating the second term on the right-hand side of \eqref{Eq:explicit formula without prime powers}.

\begin{lemma}
\label{lemma:Seven}
Let $\phi$ be an even Schwartz test function. For any fixed $J\geq 1$, we have the estimate
$$
2\sum_{p} \frac 1p \widehat \phi \Big( 2\frac{ \log p}{\log X} \Big) \frac{\log p}{\log X} = \frac{\phi(0)}2 
 + \sum_{1 \leq j\leq J} \frac{c_j\widehat \phi^{(j-1)}(0)}{(\log X)^j} +O_J\Big(  \frac 1{(\log X)^{J+1}} \Big),
$$
where 
$$ c_1:=2 \int_1^{\infty} \frac{\theta(t)-t}{t^2}dt+2$$
and for $j\geq 2$,
$$ c_j:= \frac{2^j }{(j-2)!}  \int_1^{\infty}(\log t)^{j-2} \Big( \frac{\log t}{j-1}- 1 \Big) \frac{\theta(t)-t}{t^2}dt. $$
\end{lemma}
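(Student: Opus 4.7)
The plan is to write the left-hand side as a Stieltjes integral against $d\theta(t)$ where $\theta(t)=\sum_{p\leq t}\log p$, and split $d\theta(t)=dt+d(\theta(t)-t)$. Setting $f(t):=\tfrac{2}{t\log X}\widehat\phi\!\left(\tfrac{2\log t}{\log X}\right)$, the left-hand side equals $\int_1^{\infty} f(t)\,d\theta(t)$. The $dt$-part contributes $\int_1^{\infty} f(t)\,dt=2\int_0^{\infty}\widehat\phi(2u)\,du=\tfrac12\int_{\mathbb R}\widehat\phi(v)\,dv=\phi(0)/2$, after the substitution $u=\log t/\log X$ and using Fourier inversion together with the evenness of $\widehat\phi$; this is the leading term. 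The fluctuating part $\int_1^{\infty} f(t)\,d(\theta(t)-t)$ is the source of the expansion in $1/\log X$.

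Integrating by parts, the upper boundary vanishes thanks to the compact support of $\widehat\phi$, while $\theta(1)-1=-1$ produces a boundary contribution $f(1)=2\widehat\phi(0)/\log X$, leaving $-\int_1^{\infty}f'(t)(\theta(t)-t)\,dt$. Since $f'(t)=-\tfrac{2}{t^2\log X}\widehat\phi\!\left(\tfrac{2\log t}{\log X}\right)+\tfrac{4}{t^2(\log X)^2}\widehat\phi'\!\left(\tfrac{2\log t}{\log X}\right)$, I Taylor expand both $\widehat\phi$ and $\widehat\phi'$ about the origin (legitimate, since the argument ranges over the compact support of $\widehat\phi$), producing a double sum in powers of $1/\log X$. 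Collecting the coefficient of $\widehat\phi^{(m-1)}(0)/(\log X)^m$ for $m\geq 2$ combines the term $\tfrac{2^m(\log t)^{m-1}}{(m-1)!}$ from the $\widehat\phi$ piece with $-\tfrac{2^m(\log t)^{m-2}}{(m-2)!}$ from the $\widehat\phi'$ piece, which factors as $\tfrac{2^m}{(m-2)!}(\log t)^{m-2}\!\left(\tfrac{\log t}{m-1}-1\right)$, matching $c_m$. For $m=1$ only the first piece contributes, and together with the boundary term this gives $c_1=2+2\int_1^{\infty}(\theta(t)-t)/t^2\,dt$, as required.

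The remaining point, which is the only real obstacle, is quantitative control of the Taylor remainder. Truncating the expansions at orders $J$ and $J-1$ respectively, the error in each integrand is bounded by $O\!\left((2\log t/\log X)^J\right)$, producing a total error of order $(\log X)^{-(J+1)}\int_1^{\infty}(\log t)^{J}|\theta(t)-t|\,t^{-2}\,dt$. By the Prime Number Theorem in the form $\theta(t)-t\ll t\exp(-c\sqrt{\log t})$, this integral converges for every fixed $J$, yielding the claimed bound $O_J((\log X)^{-(J+1)})$. The same input guarantees absolute convergence of the integrals defining the $c_j$.
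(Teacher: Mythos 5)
Your proof is correct and essentially reproduces the paper's argument: partial summation against $\theta(t)$ to produce the identity involving $\int_1^\infty(\cdots)\tfrac{\theta(t)-t}{t^2}\,dt$ together with the boundary term $2\widehat\phi(0)/\log X$, followed by Taylor expansion of $\widehat\phi$ and $\widehat\phi'$ about $0$ and the Prime Number Theorem bound $\theta(t)-t\ll t\exp(-c\sqrt{\log t})$ to control the resulting integrals and identify the $c_j$. The only minor deviation is that the paper first truncates the integral to $[1,X^{\xi/2}]$ before expanding and then chooses $\xi=(\log X)^{-1+\delta}$, whereas you handle the Taylor remainder directly over all of $[1,\infty)$ using uniform boundedness of $\widehat\phi^{(J)}$, a harmless simplification of the same computation.
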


\begin{proof}
Performing summation by parts, we reach the following identity:
\begin{align*}
\frac{2}{\log X}\sum_{p} \frac {\log p}p \widehat \phi \Big( 2 \frac{ \log p}{\log X} \Big) &= \frac{\phi(0) }{2} + \frac{2\widehat \phi(0)}{\log X} - \frac 2{\log X}  \int_1^{\infty} \Big( 2\frac{\widehat \phi'\big(2 \frac{\log t}{\log X}\big)}{\log X}- \widehat \phi \Big(2 \frac{\log t}{\log X}\Big)\Big)\frac{\theta(t)-t}{t^2}dt.
\end{align*}    
By the prime number theorem in the form $\theta(t)-t \ll t\exp(-2c\sqrt {\log t})$, we see that for any $0<\xi<1$,
$$\int_{X^{\xi/2}}^{\infty} \Big( 2 \frac{\widehat \phi'\big(2 \frac{\log t}{\log X}\big)}{\log X}- \widehat \phi \Big(2 \frac{\log t}{\log X}\Big)\Big)\frac{\theta(t)-t}{t^2}dt \ll \exp(-c \sqrt{\xi \log X}). $$
Moreover, taking Taylor series and applying the prime number theorem, we see that 

\begin{align*}
-&\frac 2{\log X}\int_{1}^{X^{\frac{\xi}2} } \Big( 2 \frac{\widehat \phi'(2\frac{\log t}{\log X})}{\log X}- \widehat \phi \Big(2\frac{\log t}{\log X}\Big)\Big)\frac{\theta(t)-t}{t^2}dt \\
&= 2\sum_{0\leq j\leq J} \frac 1{j!} \Big(\widehat\phi^{(j)}(0) -2 \frac{\widehat\phi^{(j+1)}(0)}{\log X} \Big)\int_{1}^{X^{\frac{\xi}2} } \frac {(2\log t)^{j}}{(\log X)^{j+1}}\frac{\theta(t)-t}{t^2}dt+O_J\Big( \int_{1}^{X^{\frac{\xi}2} }\frac {(2\log t)^{J+1}}{(\log X)^{J+2}}\frac{\theta(t)-t}{t^2}dt\Big)  \\
&= \sum_{1\leq j\leq J+1} \frac{c_j\widehat\phi^{(j-1)}(0)}{(\log X)^j} +O_J\Big(  \frac 1{(\log X)^{J+2}}+\exp(-c \sqrt{\xi \log X})\Big).
\end{align*}
The result follows from selecting $\xi = (\log X)^{-1+\delta} $ for some $\delta>0$.
\end{proof}

We now set $X=k^2$ and prove Theorem \ref{theorem small support}.

\begin{proof}[Proof of Theorem \ref{theorem small support}]
We apply Proposition \ref{proposition orthogonality bound} and obtain that the second prime sum in \eqref{eq:explicit formula without prime powers small support} satisfies the bound
$$  2\sum_{f\in B_k} \omega_f \sum_{p} \frac{\lambda_f(p)}{p^{\frac 12}} \widehat \phi \Big( \frac{\log p}{\log (k^2)} \Big) \frac{\log p}{\log (k^2)} \ll k^{\frac{3\sigma}2}2^{-k}.$$
The proof follows.
\end{proof}

\section{$1$-level density averaged over the weight: Extended support}
\label{section 5 average over weight}

In this section we study the quantities $\DplusAvg$ and $\DminusAvg$, that is we average the $1$-level density
$\mathcal D_{k}(\phi;K^2)$ over $k \asymp K$ against the weight $h(\tfrac {k-1}K)$.

\begin{lemma}[{\cite[Lemma 5.8]{Iw2}, \cite[Corollary 8.2]{ILS}}]
	\label{lemma:ILS bessel sum}
	For $h$ a non-negative, smooth function with compact support in $\mathbb{R}_{>0}$ and for any $K\geq 2$, we have the estimates
	$$ 2\sum_{k\equiv 0 \bmod 2}h\Big(\frac{k-1}{K}\Big)J_{k-1}(x) = h\Big(\frac xK\Big) + O\Big(\frac x{K^3}\Big);$$
	$$ 2\sum_{k\equiv 0 \bmod 2}i^k h\Big(\frac{k-1}{K}\Big)J_{k-1}(x) = \frac{K}{\sqrt{x}}\Im\Big( \overline{\zeta_8}e^{ix}\hbar\Big(\frac{K^2}{2x}\Big) \Big) + O\Big(\frac x{K^4}\Big),$$
	where $\zeta_8 = e^{2\pi i/8}$ and $\hbar(x)=\int_0^\infty \frac{h(\sqrt{u})}{\sqrt{2\pi u}}e^{ixu}du$.
\end{lemma}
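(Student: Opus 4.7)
The plan is to convert the sum over $k$ into one accessible by Poisson summation, after expressing $J_{k-1}(x)$ via the Jacobi--Anger integral representation
\[ J_n(x) = \frac{1}{2\pi}\int_{-\pi}^{\pi} e^{ix\sin\theta - in\theta}\, d\theta \qquad (n \in \mathbb{Z}). \]
Setting $n = k - 1$, which is odd and positive, I would detect the oddness via $\mathbf{1}_{n \text{ odd}} = \tfrac{1}{2}(1 - (-1)^n)$ and extend $h$ by zero to all of $\mathbb{R}$. This reduces each identity to evaluating two complete sums of the shape $\sum_{n \in \mathbb{Z}} \varepsilon^n h(n/K) J_n(x)$ for $\varepsilon \in \{\pm 1, \pm i\}$ (after factoring out $i^k = i \cdot i^{k-1}$ in the second case). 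Substituting the integral representation and applying Poisson summation in $n$ yields
\[ \sum_{n \in \mathbb{Z}} h(n/K)\, e^{-in\psi} = K \sum_{m \in \mathbb{Z}} \hat h\bigl(K(m + \psi/(2\pi))\bigr), \]
where $\psi = \theta - \arg(\varepsilon)$; since $\hat h$ is Schwartz, this localizes the relevant $\theta$ to an $O(1/K)$-neighbourhood of the unique phase-stationary point in $[-\pi,\pi]$, namely $\theta^* = \arg(\varepsilon)$, while the $m \neq 0$ terms are $O_N(K^{-N})$.

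For the first identity, $\varepsilon = 1$ gives $\theta^* = 0$ and $\varepsilon = -1$ gives $\theta^* = \pi$. Near $\theta = 0$, $\sin\theta = \theta + O(\theta^3)$, so after the change of variable $\theta = 2\pi u / K$ the integral becomes $\int_\mathbb{R} \hat h(u)\, e^{2\pi i u x/K}\, du$, which equals $h(x/K)$ by Fourier inversion; the Taylor remainder $x\theta^3 = O(x u^3/K^3)$ together with the rapid decay of $\hat h$ produces the stated error $O(x/K^3)$. The contribution from $\theta^* = \pi$ is the analogous Fourier inversion evaluated at $-x/K$, which vanishes since $h$ is supported in $(0,\infty)$.

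For the second identity, $\varepsilon = \pm i$ gives $\theta^* = \pm\pi/2$, where $\sin\theta = \pm 1$ is extremal. Near $\theta = \pi/2$ one has $\sin(\pi/2 + 2\pi u/K) = \cos(2\pi u/K) = 1 - 2\pi^2 u^2/K^2 + O((u/K)^4)$, so the integral collapses to a Fresnel-type expression
\[ e^{ix} \int_\mathbb{R} \hat h(u)\, e^{-2\pi^2 i x u^2/K^2}\, du. \]
Completing the square and using $\int_\mathbb{R} e^{-i\alpha u^2}\, du = \sqrt{\pi/\alpha}\, e^{-i\pi/4}$ for $\alpha > 0$, one gets $\sqrt{\pi/\alpha}\, e^{-i\pi/4} \int_0^\infty h(y)\, e^{i\pi^2 y^2/\alpha}\, dy$, and with $\alpha = 2\pi^2 x/K^2$ this rearranges to $\frac{K}{2\sqrt{x}}\, \overline{\zeta_8}\, \hbar(K^2/(2x))$ after the substitution $u = y^2$ in the definition of $\hbar$. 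The contribution from $\theta^* = -\pi/2$ is the complex conjugate quantity multiplied by $e^{-ix}$, and combining the two contributions (as dictated by the sign pattern from $\varepsilon = \pm i$) extracts twice an imaginary part, producing the stated main term.

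The main technical obstacle is the stationary-phase analysis behind the second identity, where three separate features must be controlled uniformly over the effective range $|u| \leq K^{\eps}$ of $\hat h$: the $(u/K)^4$-term in the Taylor expansion of $\cos(2\pi u/K)$ (which governs the $O(x/K^4)$ error after multiplication by $x$), the exponentially small contributions from $m \neq 0$ in the Poisson formula, and the bookkeeping of the Fresnel phase $e^{-i\pi/4}$ that must be reconciled with $\overline{\zeta_8}$ in the final expression. In contrast, the first identity needs only a linear Taylor expansion and direct Fourier inversion.
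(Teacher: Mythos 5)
The paper does not prove this lemma; it cites it from Iwaniec's book and ILS~\cite[Cor.~8.2]{ILS}. Your approach — Jacobi–Anger integral representation, detection of odd $n=k-1$ via $\tfrac12(1-(-1)^n)$, Poisson summation in $n$, then stationary phase at $\theta^*=\arg\varepsilon$ — is precisely the standard argument behind that result, and your treatment of the first identity (localization at $\theta=0$ giving $h(x/K)$; at $\theta=\pm\pi$ giving $h(-x/K)=0$; error controlled by $|e^{i\gamma}-1|\le|\gamma|$ with $\gamma=O(xu^3/K^3)$) is correct, including the $O(x/K^3)$ error.

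The gap is in the final bookkeeping of the second identity, which you compress into ``combining the two contributions\dots extracts twice an imaginary part.'' Carried out explicitly: writing $A=\sum_n i^n h(n/K)J_n(x)$ and $B=\sum_n(-i)^n h(n/K)J_n(x)$, the left side equals $i(A-B)=i(A-\overline A)=-2\,\mathrm{Im}\,A$ (note the factor $i$ from $i^k=i\cdot i^{k-1}$, which you did not track). Your stationary-phase evaluation gives $A\approx e^{ix}\frac{K}{2\sqrt x}\,\overline{\zeta_8}\,\hbar\bigl(\tfrac{K^2}{2x}\bigr)$, so the argument in fact produces $-\frac{K}{\sqrt x}\,\mathrm{Im}\bigl(\overline{\zeta_8}\,e^{ix}\hbar(\tfrac{K^2}{2x})\bigr)$, i.e.\ the \emph{negative} of the displayed main term, not the stated one. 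You can cross-check this sign independently: for $x\gg K^2$, using $J_\nu(x)\sim\sqrt{2/(\pi x)}\cos(x-\tfrac{\nu\pi}2-\tfrac\pi4)$ and $i^kJ_{k-1}(x)\sim\sqrt{2/(\pi x)}\cos(x+\tfrac\pi4)$ for even $k$, the left side is $\sim\sqrt{2/\pi}\,\frac{K}{\sqrt x}\cos(x+\tfrac\pi4)\int h$, whereas the stated right side (using $\hbar(0^+)=\sqrt{2/\pi}\int h$) is $\sim\sqrt{2/\pi}\,\frac{K}{\sqrt x}\sin(x-\tfrac\pi4)\int h=-\sqrt{2/\pi}\,\frac{K}{\sqrt x}\cos(x+\tfrac\pi4)\int h$. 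So the lemma as transcribed in the paper appears to be off by an overall sign; since the paper ultimately bounds this term using only the rapid decay of $\hbar$ (it never contributes a main term), that transcription error is harmless there. But your proposal asserts, without doing the computation, that the argument ``produces the stated main term'' — that assertion is the one step that would not survive being written out. Two smaller points: for $\varepsilon=-1$ or $\pm i$ the stationary point can sit at or near the endpoints $\pm\pi$, so you must invoke $2\pi$-periodicity of the integrand to see that only one Poisson frequency $m$ contributes per stationary point; and the expansion $\cos(2\pi u/K)=1-2\pi^2u^2/K^2+O((u/K)^4)$ needs the absolute bound $|e^{i\gamma}-1|\le|\gamma|$ (not a Taylor approximation of the exponential) to deliver $O(x/K^4)$ uniformly.
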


In the next lemma we estimate the total weight $H^{\pm}(K)$ and a related sum. 
\begin{lemma}
\label{lemma:average h log}
For $h$ a non-negative, smooth function with compact support in $\mathbb{R}_{>0}$ and for any $K,N\geq 2$, we have the estimates
\begin{equation*}
H^{\pm}(K) = \frac{K \int_{\mathbb R^+} h}4+O_N(K^{-N});
\end{equation*}  

\begin{equation*}
 4\sum_{k\equiv 3\pm 1 \bmod 4} h\Big(\frac{k-1}{K}\Big) \log k  = K \log K \int_{\mathbb R^+} h+ K\int_{\mathbb R^+} h\cdot \log + \sum_{\ell = 1}^{N} \frac{(-1)^{\ell+1}}{\ell K^{\ell-1}} \int_{\mathbb R^+} t^{-\ell }h(t) dt +   O_N(K^{-N}).
\end{equation*}
\end{lemma}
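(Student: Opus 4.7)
The plan is to treat both estimates by Poisson summation on the arithmetic progression $k\equiv 3\pm 1 \bmod 4$, extracting the zero frequency as main term and showing the remaining frequencies give $O_N(K^{-N})$ thanks to the smoothness and compact support of $h$. Writing $k = 4m+r$ with $r\in\{0,2\}$, for any test function $f$ (to be taken as $f(k)=h((k-1)/K)$ or $f(k)=h((k-1)/K)\log k$) one has
\[
\sum_{k\equiv r\bmod 4} f(k) = \frac{1}{4}\sum_{m\in\mathbb Z} e^{2\pi i mr/4}\,\widehat f(m/4).
\]
The $m=0$ contribution gives the main term, while integration by parts $N+1$ times in $\widehat f(m/4)$ uses the fact that $f$ is supported on $k\asymp K$ and each derivative costs a factor $K^{-1}$ (the $\log k$ factor does not spoil this, since $(\log k)'=1/k \ll K^{-1}$ and further derivatives are even smaller), producing a bound $\ll_N K^{-N}|m|^{-N-1}$, summable in $m\ne 0$ to $O_N(K^{-N})$.

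For the first estimate, the $m=0$ term is $\tfrac{1}{4}\int h((k-1)/K)\,dk = \tfrac{K}{4}\int_{\mathbb R^+} h$, giving the claimed formula.

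For the second estimate, the $m=0$ term equals $\tfrac{K}{4}\int_{\mathbb R^+} h(u)\log(Ku+1)\,du$. The key algebraic step is the exact decomposition
\[
\log(Ku+1) = \log K + \log u + \log\!\left(1+\frac{1}{Ku}\right),
\]
valid on the support of $h$ (which is a compact subset of $\mathbb R_{>0}$, so $u$ is bounded below by some $u_0>0$ and the last logarithm is analytic for $K\ge 2/u_0$). Taylor expanding $\log(1+1/(Ku))=\sum_{\ell=1}^{N}\frac{(-1)^{\ell+1}}{\ell (Ku)^\ell} + O_N((Ku)^{-N-1})$ and integrating termwise against $h$ yields
\[
\tfrac{K}{4}\!\int h(u)\log(Ku+1)\,du = \tfrac{K}{4}\!\left(\log K\int h + \int h\cdot\log + \sum_{\ell=1}^{N}\frac{(-1)^{\ell+1}}{\ell K^\ell}\!\int u^{-\ell}h(u)\,du\right) + O_N(K^{-N}).
\]
Multiplying by $4$ recovers the stated expansion. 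The only minor technical point, which I expect to be the main (mild) obstacle, is verifying rigorously the rapid decay of $\widehat f(m/4)$ in the presence of the $\log k$ factor; this is handled by noting that on $\mathrm{supp}(h((\cdot-1)/K))$ one has $k\gg K\gg 1$, so all derivatives of $\log k$ are $O(K^{-j})$ and the iterated integration by parts loses nothing.
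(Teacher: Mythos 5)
Your proposal is correct and follows essentially the same route as the paper: Poisson summation on the arithmetic progression isolates the zero frequency as the main term, repeated integration by parts (using that all derivatives of $\log k$ on the support are $O(K^{-1})$) kills the nonzero frequencies to $O_N(K^{-N})$, and the zero-frequency integral is evaluated by writing $\log(Ku+1)=\log K+\log u+\log\bigl(1+\tfrac{1}{Ku}\bigr)$ and Taylor expanding the last term. The paper phrases the progression step via orthogonality of additive characters and works with the shifted variable ($h(k/K)$, $\log(k+1)$), but the content is identical.
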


\begin{proof}
More generally, we will show that for any $a\bmod 4, $
\begin{equation}
\sum_{k\equiv a \bmod 4}h\Big(\frac{k}{K}\Big) = \frac{K \int_{\mathbb R^+} h}4+O_N(K^{-N});
\label{equation1averagehloggeneral}
\end{equation}  
\begin{equation}
  4\sum_{k\equiv a \bmod 4}h\Big(\frac{k}{K}\Big) \log (k+1) = K \log K \int_{\mathbb R^+} h+ K\int_{\mathbb R^+} h\cdot \log + \sum_{\ell = 1}^{N} \frac{(-1)^{\ell+1}}{\ell K^{\ell-1}} \int_{\mathbb R^+} t^{-\ell }h(t) dt +   O_N(K^{-N}). 
\label{equation2averagehloggeneral}
\end{equation}
Now, for any $b \bmod 4,$ Poisson summation gives
$$ \sum_{k\in \mathbb Z} e\Big(\frac{bk}{4}  \Big) h\Big(\frac{k}{K}\Big)=K\sum_{k\in \mathbb Z} \widehat h\Big(\Big(k-\frac b4\Big)K\Big)= K \widehat h(0) \delta_{b=0}+O_N(K^{-N}). $$
The estimate \eqref{equation1averagehloggeneral} follows by orthogonality of additive characters. Similarly, we see that
\begin{equation}
\sum_{k\equiv a \bmod 4}h\Big(\frac{k}{K}\Big) \log(k+1) = \frac{ \int_{\mathbb R^+} h\big(\frac tK\big) \log(t+1) dt}4+O_N(K^{-N}).
\label{equation3averagehloggeneral}
\end{equation}  
Indeed, integration by parts shows that 
$$  \int_{\mathbb R^+} h\Big(\frac tK\Big) \log(t+1) e( - \xi t ) dt \ll_N \frac {\log K}{(|\xi|K)^N}. $$
Finally, the integral on the right-hand side of \eqref{equation3averagehloggeneral} equals
\begin{align*}
K \int_{\mathbb R^+} h(t) \log(Kt+1) dt = K \log K \int_{\mathbb R^+} h+ K\int_{\mathbb R^+} h\cdot \log + \sum_{\ell = 1}^{N} \frac{(-1)^{\ell+1}}{\ell K^{\ell-1}} \int_{\mathbb R^+} t^{-\ell }h(t) dt +   O_N(K^{-N}),
\end{align*}
and \eqref{equation2averagehloggeneral} follows.
\end{proof}

In the next lemma we estimate the average of \eqref{Eq:explicit formula without prime powers} over $k$. In order to do so, we will apply Lemmas \ref{lemma:ILS bessel sum} and \ref{lemma:average h log}.
\begin{lemma}
\label{lemma:oneleveldeninterms of kloosterman}
Let $\phi$ be an even Schwartz test function and let $h$ be a non-negative, smooth function with compact support in $\mathbb{R}_{>0}$. Under the condition $\sigma=$sup$($supp$(\widehat \phi)) <2$ and for $K\geq 2$, we have the estimate 
	\begin{multline*}
	\DplusminusAvg = \widehat \phi(0) \bigg(1 +\frac{\frac{ \int_{\mathbb R^+} h \cdot \log}{\int_{\mathbb R^+} h }-\log (4\pi)}{\log K} \bigg) +2\sum_{p} \frac 1p \widehat \phi \Big( \frac{2 \log p}{\log (K^2)} \Big) \frac{\log p}{\log (K^2)}\\ \mp \frac {\pi}{\log (K^2)H^{\pm}(K)} \sum_{p} \frac{ \log p}{p^{\frac 12}} \widehat \phi \Big( \frac{ \log p}{\log (K^2)} \Big)\sum_{c=1}^{\infty} \frac{S(p,1;c)}c  h\Big( \frac{4\pi \sqrt{p}}{cK}\Big)+ O_\eps(K^{\frac{\sigma}2-1+\eps}+K^{-\frac 13+\eps}).
	  \end{multline*}
\end{lemma}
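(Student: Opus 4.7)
The plan is to start from the unaveraged formula \eqref{Eq:explicit formula without prime powers} in Lemma \ref{Lemma:prime powers} applied with $X = K^2$, and to average each side against the weight $h((k-1)/K)/H^{\pm}(K)$ over $k \equiv 3 \pm 1 \bmod 4$. Three types of terms arise. First, the gamma-factor-type contribution $\widehat{\phi}(0)(\log(k^2) - \log(16\pi^2))/\log(K^2)$ is handled by Lemma \ref{lemma:average h log}: the weighted average of $\log k$ equals $\log K + (\int_0^{\infty} h \cdot \log)/(\int_0^{\infty} h) + O(K^{-2})$, which together with the identity $\log(16\pi^2) = 2\log(4\pi)$ and the factor $1/\log(K^2) = 1/(2\log K)$ produces exactly the first bracketed term in the statement. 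Second, the sum $2\sum_p p^{-1} \widehat{\phi}(2\log p/\log(K^2)) \log p/\log(K^2)$ is independent of $k$ and passes through unchanged. Third, the prime-power error $O_\eps(K^{\sigma/2 + \eps}/k + k^{-1/3 + \eps})$ becomes $O_\eps(K^{\sigma/2 - 1 + \eps} + K^{-1/3 + \eps})$ upon averaging over $k \asymp K$, already matching the stated total error.

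The main work is to evaluate the average of the $\lambda_f(p)$ sum. I will apply the Petersson formula of Lemma \ref{lemma:peterssonexact} with $m = p$ and $n = 1$ (so $\delta(m, n) = 0$), swap summations, and arrive at an inner $k$-sum of the form $\sum_{k \equiv 3 \pm 1 \bmod 4} h((k-1)/K) \, i^k J_{k-1}(4\pi\sqrt{p}/c)$. To invoke the two formulas of Lemma \ref{lemma:ILS bessel sum}, I will write the congruence indicator on even $k$ as $(1 \pm i^k)/2$. Using $(i^k)^2 = 1$ on even $k$, this decomposes the $k$-sum as $(\pm A + B)/2$, where
\[ A := \sum_{k \text{ even}} h\!\left(\tfrac{k-1}{K}\right) J_{k-1}(x), \qquad B := \sum_{k \text{ even}} i^k \, h\!\left(\tfrac{k-1}{K}\right) J_{k-1}(x), \]
evaluated at $x = 4\pi\sqrt{p}/c$. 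Lemma \ref{lemma:ILS bessel sum} replaces $A$ by $h(x/K)/2$ plus an error. The resulting contribution $\pm h(4\pi\sqrt p/(cK))/4$, combined with the overall minus sign from the explicit formula and the asymptotic $H^\pm(K) = (K/4) \int_0^{\infty} h + O_N(K^{-N})$, yields exactly the claimed Kloosterman--Bessel expression with sign $\mp$.

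The hard part will be showing that the remaining contributions fit into $O_\eps(K^{\sigma/2 - 1 + \eps})$. The error $O(x/K^3)$ coming from $A$, bounded using the Weil estimate $|S(p, 1; c)| \leq \tau(c)\sqrt c$, the support constraint $p \leq K^{2\sigma}$ from $\widehat{\phi}$, and summability of $\sum_c \tau(c)/c^{3/2}$, contributes only $O(K^{2\sigma - 4})$, which is acceptable since $\sigma < 2$. The oscillating piece $B = (K/(2\sqrt x))\,\Im(\overline{\zeta_8}\, e^{ix}\hbar(K^2/(2x))) + O(x/K^4)$ requires the key observation that, since $h$ is smooth with compact support in $\mathbb R_{>0}$, the function $\hbar$ is rapidly decaying, $\hbar(y) \ll_N (1+|y|)^{-N}$. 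With $y = K^2 c/(8\pi\sqrt p)$ and $p \leq K^{2\sigma}$, one has $|y| \gg c K^{2-\sigma}$, so this decay (with $N$ taken large enough) produces arbitrary polynomial savings in $K$ and renders the full $B$-contribution negligible; the auxiliary $O(x/K^4)$ term is even smaller. Combining these estimates with the earlier errors completes the proof.
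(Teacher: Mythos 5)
Your proposal is correct and follows essentially the same route as the paper: average the prime-power-free explicit formula from Lemma~\ref{Lemma:prime powers} over $k$, handle the archimedean term via Lemma~\ref{lemma:average h log}, apply Petersson, and split the $k$-sum using the parity indicator $(1\pm i^k)/2$ on even $k$ so that the two estimates of Lemma~\ref{lemma:ILS bessel sum} can be invoked — the unweighted Bessel average producing the main Kloosterman term and the $i^k$-weighted one being negligible by the rapid decay of $\hbar$ since $p \leq K^{2\sigma}$ with $\sigma<2$. One small slip: the weighted average of $\log k$ over $H^\pm(K)$ carries an error $O(K^{-1})$ (coming from the $\ell=1$ term in Lemma~\ref{lemma:average h log}), not $O(K^{-2})$, but this is dominated by the stated $O_\eps(K^{-1/3+\eps})$ and does not affect the conclusion.
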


\begin{proof}
	From combining Lemmas ~\ref{Lemma:prime powers} and \ref{lemma:average h log}, we have that
	\begin{multline*}
	\DplusminusAvg = \widehat \phi(0) \Big(1 +\frac{\frac{ \int_{\mathbb R^+} h \cdot \log}{\int_{\mathbb R^+} h }-\log (4\pi)}{\log K} \Big) +2\sum_{p} \frac 1p \widehat \phi \Big( \frac{2 \log p}{\log (K^2)} \Big) \frac{\log p}{\log (K^2)} \\ - \frac{2}{H^{\pm}(K)}\sum_{k\equiv 3\pm 1 \bmod 4}h\Big(\frac{k-1}{K}\Big)\sum_{f\in B_k} \omega_f \sum_{p} \frac{\lambda_f(p)}{p^{\frac 12}} \widehat \phi \Big( \frac{\log p}{\log (K^2)} \Big) \frac{\log p}{\log (K^2)} + O_\eps(K^{\frac{\sigma}2-1+\eps}+K^{-\frac 13+\eps}).
	\end{multline*}
	
	By the Petersson trace formula (Lemma \ref{lemma:peterssonexact}), the third term is equal to
	\begin{align}
	&-\frac{2 \pi}{H^{\pm}(K)}\sum_{k\equiv 0 \bmod 2}( i^k\pm 1)h\Big(\frac{k-1}{K}\Big) \sum_{p} \frac{1}{p^{\frac 12}} \widehat \phi \Big( \frac{\log p}{\log (K^2)} \Big) \frac{\log p}{\log (K^2)}  \sum_{c \geq 1} c^{-1} S(p,1,c) J_{k-1}\Big(\frac{4\pi  \sqrt{p}}c\Big).
	\label{equation after pettersson third term}
	\end{align}
	
	 Applying Lemma \ref{lemma:ILS bessel sum}, we see that  
	 \begin{multline}
	 - \frac{2}{H^\pm(K)}\sum_{k\equiv 0 \bmod 2}i^kh\Big(\frac{k-1}{K}\Big)J_{k-1}\Big(\frac{4\pi \sqrt{p}}c\Big) =  \frac{-Kc^{\frac 12}}{H^\pm(K) 2\pi^{\frac 12}  p^{\frac 14}}\Im\Big( \overline{\zeta_8}e^{\frac{i4\pi  \sqrt{p}}c}\hbar\Big(\frac{K^2c }{8\pi  \sqrt{p}}\Big) \Big) + O\Big( \frac{\sqrt{p}}{cK^5}\Big).
	 \label{equation after ptersson third term evaluation}
	 	 \end{multline}
	 	 Since $p\leq K^{4-\eps}$, we see by the rapid decay of $\hbar$ that for any $A\geq 1$, the first term in this expression is 
$$ \ll_A  \frac{c^{\frac 12}}{p^{\frac 14}} \Big(\frac{K^2c}{  \sqrt{p}}\Big)^{-A} ,$$
and hence the contribution of this term to \eqref{equation after pettersson third term} is
$$ \ll_A K^{A(\sigma-2)+\frac \sigma 2 }.  $$
As for the sum of the error terms in \eqref{equation after ptersson third term evaluation}, the contribution is $\ll K^{2\sigma-5}$, which is an admissible error term.
	 Moreover, applying Lemma \ref{lemma:ILS bessel sum} once more,   
	 \begin{align*}
	 - \frac{2}{H^\pm(K)}\sum_{k\equiv 0 \bmod 2}h\Big(\frac{k-1}{K}\Big)J_{k-1}\Big(\frac{4\pi \sqrt{p}} c\Big) =  -\frac{1}{H^\pm(K)}h\Big( \frac{4\pi \sqrt{p}}{cK}\Big) + O\Big( \frac{\sqrt{p}}{cK^4}\Big),
	 	 \end{align*}
	resulting in a main term as well as the admissible error term $O(K^{2\sigma-4})$.
\end{proof}

We now end this section by evaluating the second sum over primes in Lemma \ref{lemma:oneleveldeninterms of kloosterman}, under GRH for Dirichlet $L$-functions. This term will be responsible for the phase transition at $1$, and will be investigated more closely in Section \ref{section 6 transition term}.

\begin{lemma} \label{lemma:main term integral}
	Let $\phi$ be an even Schwartz test function, let $h$ be a non-negative, smooth function with compact support in $\mathbb{R}_{>0}$, and assume the Riemann Hypothesis for Dirichlet $L$-functions. Then for any $K\geq 2$ we have the estimate
	\begin{multline} 
\label{equation lemma GRH}	
	\sum_{c\geq 1} \frac 1c \sum_{p} \frac{ \log p}{p^{1/2}} \widehat \phi \Big( \frac{ \log p}{\log (K^2)} \Big) S(p,1;c)  h\Big( \frac{4\pi \sqrt{p}}{cK}\Big) =   \log (K^2) \int_{0}^{\sigma} K^{u} \widehat \phi(u)\sum_{c\geq 1}\frac {\mu^2(c)}{c\varphi(c)} h\Big( \frac{4\pi K^{u-1}}{c}\Big) du \\+ O(K^{\sigma-1} (\log K)^3),
	\end{multline}
	where $\varphi$ is Euler's totient function.
\end{lemma}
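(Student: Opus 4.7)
The plan is to open the Kloosterman sum $S(p,1;c)$ via Dirichlet characters modulo $c$, isolate the contribution of the principal character (which produces the main term), and control the remaining characters using GRH. For $(p,c)=1$, Fourier inversion on $(\mathbb{Z}/c\mathbb{Z})^{\times}$ yields
\[
S(p,1;c)=\frac{1}{\varphi(c)}\sum_{\chi\bmod c}\chi(p)\,\tau(\bar{\chi})^{2},\qquad \tau(\chi):=\sum_{(y,c)=1}\chi(y)\,e(y/c).
\]
The compact support of $h$ forces $\sqrt{p}\asymp cK$, hence $p\asymp c^{2}K^{2}$, which for $K$ large is incompatible with $p\mid c$; so the terms with $p\mid c$ contribute nothing.

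Isolating the principal character $\chi_{0}$, for which $\tau(\bar{\chi}_{0})$ is the Ramanujan sum $c_{c}(1)=\mu(c)$, gives the coefficient $\mu^{2}(c)/(c\varphi(c))$ together with the prime sum
\[
\sum_{(p,c)=1}\frac{\log p}{\sqrt{p}}\,\widehat{\phi}\!\left(\tfrac{\log p}{\log K^{2}}\right)h\!\left(\tfrac{4\pi\sqrt{p}}{cK}\right).
\]
I would evaluate this sum by partial summation against $\theta(x)=x+O(xe^{-c\sqrt{\log x}})$, reducing it (up to a negligible error) to $\int_{1}^{\infty}F_{c}(x)\,dx$, where $F_{c}(x)=x^{-1/2}\widehat{\phi}(\log x/\log K^{2})\,h(4\pi\sqrt{x}/(cK))$. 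The substitution $x=K^{2u}$ then produces exactly
\[
\log(K^{2})\int_{0}^{\sigma}K^{u}\widehat{\phi}(u)\,h\!\left(\tfrac{4\pi K^{u-1}}{c}\right)du,
\]
matching the claimed main term. The effect of the restriction $(p,c)=1$ is negligible since only $\omega(c)\leq\log c$ primes are excluded, and on the support of $F_c$ these would contribute only if $p\mid c$ and $p\asymp c^2K^2$ simultaneously, which is impossible.

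For the non-principal characters, GRH for $L(s,\chi)$ yields $\sum_{p\leq x}\chi(p)\log p=O(\sqrt{x}\log^{2}(cx))$. A crucial estimate is that $|F_{c}'(x)|\ll x^{-3/2}$ on the support $x\asymp c^{2}K^{2}$; combined with partial summation this gives $\bigl|\sum_{p}\chi(p)\log p\cdot F_{c}(p)\bigr|\ll\log^{2}(cK)$. Using the trivial bound $|\tau(\bar{\chi})|^{2}\leq c$ and summing over the $\varphi(c)-1$ non-principal characters produces $O(\log^{2}(cK))$ per value of $c$; since the support of $h$ forces $c\lesssim K^{\sigma-1}$, summing over $c$ yields the overall error $O(K^{\sigma-1}(\log K)^{3})$.

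The main obstacle will be the careful bookkeeping of the non-principal character error, in particular verifying that the smoothness of $\widehat{\phi}$ and $h$, together with the constraints forcing $x\asymp c^{2}K^{2}$ on the support of $F_{c}$, are strong enough to keep $|F_{c}'(x)|\ll x^{-3/2}$ uniformly and hence prevent the accumulated log-factors (from GRH, the sum over $\chi$, and the sum over $c$) from exceeding $(\log K)^{3}$.
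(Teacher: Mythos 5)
Your strategy — opening $S(p,1;c)$ via Dirichlet characters, isolating the principal character for the main term, and controlling the non-principal contribution with GRH and partial summation against $F_c$ using $|F_c'(x)|\ll x^{-3/2}$ on the support $x\asymp c^2K^2$ — matches the paper's in substance. The paper simply black-boxes the character expansion into \cite[Lemma 6.1]{ILS}, namely $\sum_{p\le t}S(p,1;c)\log p = t\,\mu^2(c)/\varphi(c) + O\big(\varphi(c)\,t^{1/2}(\log ct)^2\big)$ under GRH, and then integrates this estimate against $F_c$.

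There is, however, a genuine gap in your handling of the principal character. You replace $\sum_{(p,c)=1}(\log p)F_c(p)$ by $\int F_c(x)\,dx$ via partial summation against the \emph{unconditional} prime number theorem $\theta(x)=x+O(x e^{-c_0\sqrt{\log x}})$ and declare the resulting error negligible — it is not. On the support of $F_c$ one has $\int|\theta(x)-x|\,|F_c'(x)|\,dx \ll e^{-c_0\sqrt{\log(cK)}}\int_{x\asymp c^2K^2}x^{-1/2}\,dx \asymp cK\,e^{-c_0\sqrt{\log(cK)}}$; after multiplying by $\mu^2(c)/(c\varphi(c))$ and summing over $c\lesssim K^{\sigma-1}$ this is of order $K(\log K)\,e^{-c_0\sqrt{\log K}}$, which exceeds $K^{\sigma-1}(\log K)^3$ for every fixed $\sigma<2$: the inequality $K^{2-\sigma}\ll e^{c_0\sqrt{\log K}}(\log K)^2$ that one would need fails as $K\to\infty$. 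The fix is to invoke RH for $\zeta(s)$ — already part of the GRH assumption in the lemma — which gives $\theta(x)-x=O(\sqrt{x}\log^2 x)$ and hence a per-$c$ error $O(\log^2(cK))$, exactly as for the non-principal characters; this is precisely what ILS Lemma 6.1 packages. With that correction the rest of your computation goes through and in fact yields the slightly sharper bound $O(K^{\sigma-1}(\log K)^2)$.
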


\begin{proof}
If $\sigma < 1$, then for large enough $K$ the left-hand side of \eqref{equation lemma GRH} is identically zero. We may thus assume that $\sigma\geq 1$. The sum over $p$ equals
	\begin{align*}
	\int_{0}^{\infty}   \frac{ 1}{t^{\frac 12}} \widehat \phi \Big( \frac{ \log t}{\log (K^2)} \Big) h\Big( \frac{4\pi \sqrt{t}}{cK}\Big) dT(t),
	\end{align*}
	where, by \cite[Lemma 6.1]{ILS},
	\begin{align*}
	T(t):= \sum_{p\leq t}  S(p,1;c)\log p = t \frac {\mu^2(c)}{\varphi(c)} +O(\varphi(c) t^{\frac 12}  (\log(ct))^2).
	\end{align*}

Note that our restriction on the support of $h$ implies that $c\asymp \sqrt t/K$, and hence the restriction on the support of $\widehat \phi$ implies that for squarefree values of $c$ and for $t\leq K^{4-\epsilon}$,  the main term in this estimate is always larger than the error term. After a straightforward calculation, we obtain that the left-hand side of \eqref{equation lemma GRH} equals 
	$$ \sum_{c\geq 1} \frac{\mu^2(c)}{c\varphi(c)}\int_{1}^{\infty}   \frac{ 1}{t^{\frac 12}} \widehat \phi \Big( \frac{ \log t}{\log (K^2)} \Big) h\Big( \frac{4\pi \sqrt{t}}{cK}\Big) dt + O(K^{\sigma-1} (\log K)^3).  $$
	The proof follows. 	
\end{proof}

\section{Evaluation of the transition term}
\label{section 6 transition term} 

The goal of this section is to evaluate the integral in Lemma \ref{lemma:main term integral}. This will be done using different techniques depending on the range of the variable $u$. To this end, for $a,b \in \mathbb R_{\geq 0}$ we define
 $$I_{a,b}:=\frac {\pi}{H^{\pm}(K)} \int_{a}^{b} K^{u} \widehat \phi(u)\sum_{c\geq 1}\frac {\mu^2(c)}{c\varphi(c)} h\Big( \frac{4\pi K^{u-1}}{c}\Big) du.
$$
Notice that the inner sum is long only when $u$ is larger and far away from $1$.
By Lemmas \ref{lemma:oneleveldeninterms of kloosterman} and \ref{lemma:main term integral}, we see that when $\sigma =$sup$($supp$(\widehat{\phi})) <2$ and under the assumption of GRH for Dirichlet $L$-functions,
\begin{equation}
	\DplusminusAvg =  \widehat \phi(0) \bigg(1 +\frac{\frac{ \int_{\mathbb R^+} h \cdot \log}{\int_{\mathbb R^+} h }-\log (4\pi)}{\log K} \bigg)+2\sum_{p} \frac 1p \widehat \phi \Big( \frac{2 \log p}{\log (K^2)} \Big) \frac{\log p}{\log (K^2)} \mp I_{0,\infty} + O_{\epsilon}(K^{\frac{\sigma}2-1+\eps}+K^{-\frac 13+\eps}).
\label{eq:power saving in one level density}	  \end{equation}

We now move on to evaluating the integral $I_{0,\infty}$. We let $\delta_K$ be a positive parameter which satisfies $\delta_K \gg_h 1/\log K$.
Recall that $h$ is supported in $\mathbb R_+$, and hence for $K$ large enough the integrand in $I_{0,\infty}$ is zero in the interval $[0,1-\delta_K)$. Hence,
$$I_{0,\infty} = I_{1-\delta_K,\sigma},$$
where, as before, $\sigma=$sup$($supp$(\widehat \phi))$.

\begin{lemma}
We have the unconditional estimate\footnote{The error term here can be improved to $O(x^{\frac 12} \exp(-c(\log x)^{\frac 35} (\log\log x)^{-\frac 13}))$ by replacing \eqref{equation estimate S_1 squarefree} with the stronger estimate obtained from combining \cite[Exercise 19, \S 6.2.1]{MV} with the Korobov-Vinogradov zero-free region.} 
$$\sum_{c\leq x}\frac {c\mu^2(c)}{\varphi(c)}  = x+ O(x^{\frac 12} ).$$
\label{lemma theta was found}
\end{lemma}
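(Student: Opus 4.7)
My plan is to decompose the summand via a convolution identity and reduce to a standard count of squarefree integers coprime to a given modulus. The starting point is the identity $c/\varphi(c) = \sum_{d\mid c}\mu^2(d)/\varphi(d)$, valid for squarefree $c$, which follows by expanding $\prod_{p\mid c}(1+1/(p-1))$ as a sum over squarefree divisors of $c$. Multiplying through by $\mu^2(c)$ (which kills the non-squarefree terms on both sides) and swapping the order of summation, one obtains
\[
\sum_{c\leq x}\frac{c\mu^2(c)}{\varphi(c)} \;=\; \sum_{\substack{d\leq x \\ d\text{ squarefree}}}\frac{Q_d(x/d)}{\varphi(d)}, \qquad Q_d(y) := \#\{e\leq y : \mu^2(e)=1,\ (e,d)=1\}.
\]

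Next, I would establish the standard asymptotic $Q_d(y) = (y/\zeta(2))\prod_{p\mid d}\frac{p}{p+1} + O(\sqrt{y}\,2^{\omega(d)})$ by inserting the identity $\mu^2(n) = \sum_{k^2\mid n}\mu(k)$, interchanging summations, and combining the truncated series $\sum_{k\leq\sqrt{y},\,(k,d)=1}\mu(k)/k^2$ with the elementary count of integers $\leq y/k^2$ coprime to $d$. Substituting this into the double sum, the main term becomes
\[
\frac{x}{\zeta(2)}\sum_{d\text{ squarefree}}\frac{1}{d\,\varphi(d)}\prod_{p\mid d}\frac{p}{p+1} \;=\; \frac{x}{\zeta(2)}\prod_{p}\Bigl(1+\frac{1}{p^2-1}\Bigr) \;=\; \frac{x}{\zeta(2)}\prod_{p}\frac{p^2}{p^2-1} \;=\; x,
\]
where the last step uses $\zeta(2)=\prod_p(1-p^{-2})^{-1}$. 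The tail of the outer sum past $d=x$ contributes $O(1)$, absorbed into the error. For the error term I am left with $\sqrt{x}\sum_{d\text{ squarefree}}\tau(d)/(\sqrt{d}\,\varphi(d))$, which converges absolutely since the summand is $O(d^{-3/2+\varepsilon})$, yielding the claimed $O(\sqrt{x})$.

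The main conceptual point, and essentially the only nontrivial step, is recognizing the clean Euler product cancellation that produces the coefficient exactly $1$ rather than some arithmetic constant; the rest consists of routine manipulation of absolutely convergent series and a standard application of the squarefree-count formula.
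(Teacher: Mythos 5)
Your proof is correct but takes a genuinely different route from the paper's, and it is a bit more direct. Both arguments start from essentially the same Dirichlet‑convolution identity $c/\varphi(c) = \sum_{d\mid c}\mu^2(d)/\varphi(d)$ (the paper writes it divided through by $c$, as $\mu^2(c)/\varphi(c) = (\mu^2(c)/c)\sum_{d\mid c}\mu^2(d)/\varphi(d)$). The paper applies it to the \emph{unweighted} sum $\sum_{c\le x}\mu^2(c)/\varphi(c)$, which reduces to the partial sums $S_d(y) = \sum_{m\le y,\,(m,d)=1}\mu^2(m)/m$. These are estimated with explicit constants $C_1(d),C_2(d)$ by iterating a Liouville‑function recursion $S_d(y)=\sum_{\ell\mid d^\infty}\lambda(\ell)\ell^{-1}S_1(y/\ell)$ down to the known asymptotic for $S_1$, giving a main term $\sim\log x$; the target estimate $\sum_{c\le x}c\mu^2(c)/\varphi(c)=x+O(\sqrt{x})$ is then recovered by Abel summation. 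You instead apply the convolution directly to the weighted sum, so the inner quantity is the count $Q_d(y)$ of squarefree integers up to $y$ coprime to $d$, handled by the standard $\mu^2(n)=\sum_{k^2\mid n}\mu(k)$ sieve. This bypasses both the Liouville recursion and the final partial summation, and your error bookkeeping (truncation of the outer sum, convergence of $\sum_d \tau(d)/(\sqrt{d}\,\varphi(d))$, the $O(1)$ tail of the Euler product) is all correct. Both arguments bottom out in the identical Euler‑product cancellation $\zeta(2)^{-1}\prod_p(1+\tfrac{1}{p^2-1})=1$. What the paper's longer route buys is the intermediate estimate for $S_d(y)$ with explicit constants, which is closer in spirit to tools reused elsewhere in that section (e.g.\ the Mellin‑transform computations with $Z(s)$); for the lemma in isolation, your version is cleaner.
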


\begin{proof}
We first establish the following estimate, for squarefree values of $d$:
\begin{equation}
S_d(x):=\sum_{\substack{m\leq \frac xd \\ (m,d)=1}}\frac {\mu^2(m)}{m} = C_1(d)\log x +C_2(d) +O\Big(x^{-\frac 12} d^{\frac 12 }\prod_{p\mid d} \big( 1-p^{-\frac 12}\big)^{-1}\Big),
\label{equation old lemma}
\end{equation}
 where 
$$ C_1(d) := \frac 1{\zeta(2)}\prod_{p\mid d} \Big( 1-\frac 1{p+1}\Big); \hspace{1cm} C_2(d) :=C_1(d) \Big( \gamma-2\frac{\zeta'}{\zeta}(2)-\sum_{p\mid d} \frac{p\log p}{p+1} \Big).   $$
To do so, note that
$$ S_d(x) = \sum_{\ell_1 \mid d}\frac{\lambda(\ell_1)}{\ell_1} S_{\ell_1} \Big( \frac x {d}\Big). $$
Applying this equality iteratively, we reach the identity
\begin{equation}
 S_d(x) = \sum_{\substack{\ell_1  \mid d \\ \ell_2 \mid \ell_1 \\ \vdots \\ \ell_k \mid \ell_{k-1}}}\frac{\lambda(\ell_1)\cdots \lambda(\ell_k)}{\ell_1\cdots \ell_k} S_{\ell_k} \Big( \frac x {d\ell_1 \cdots \ell_{k-1}}\Big) = \sum_{ \substack{\ell \mid d^{\infty} }  } \frac{\lambda(\ell)}{\ell} S_1\Big(\frac x{d\ell}\Big). 
\label{equation identity with ells}
\end{equation} 
A summation by parts combined with \cite[Theorem 8.25]{NZM} yields that\footnote{The precise value of the constant is deduced from writing
$S_1(x) = \frac 1{2\pi i} \int_{(1)} \frac{\zeta(s+1)}{\zeta(2s+2)} \frac{x^s }{s} ds$ and shifting the contour of integration to the left.} 
\begin{equation}
S_1(x) = \frac{1}{\zeta(2)} \Big(\log x+\gamma-2\frac{\zeta'}{\zeta}(2) \Big)+ O(x^{-\frac 12} ).
\label{equation estimate S_1 squarefree}
\end{equation} 
Inserting this estimate into \eqref{equation identity with ells}, we are left with an error term which is 
$$ \ll d^{\frac 12} x^{-\frac 12} \sum_{ \substack{\ell \mid d^{\infty}  }  } \frac{1}{\ell^{\frac 12}} \leq d^{\frac 12}x^{-\frac 12} \prod_{p\mid d} \sum_{\alpha\geq 0} \frac{1}{p^{\frac {\alpha}2}} = d^{\frac 12}x^{-\frac 12} \prod_{p\mid d} \big( 1-p^{-\frac 12}\big)^{-1}, $$
and \eqref{equation old lemma} follows. The claimed estimate then follows from the convolution identity
\begin{align*}
\sum_{c\leq x}\frac {\mu^2(c)}{\varphi(c)} =\sum_{c\leq x}\frac {\mu^2(c)}{c} \sum_{d\mid c } \frac{\mu^2(d)}{\varphi(d)} = \sum_{d\leq x}\frac{\mu^2(d)}{d\varphi(d)}  \sum_{\substack{m\leq \frac xd \\ (m,d)=1}}\frac {\mu^2(m)}{m}
\end{align*}
and a straightforward summation by parts.
\end{proof}

We now evaluate the part of the integral $I_{0,\infty}$ for which $u$ is slightly larger than $1$. In this range, the sum over $c$ is fairly long and we can effectively apply Lemma \ref{lemma theta was found}.
\begin{lemma}[Range $u>1+\delta_K$]
\label{lemma range u large}
Let $\phi$ be an even Schwartz test function and let $K\geq 2$. We have that	$$ I_{1+\delta_K,\infty} =    \int_{1+\delta_K}^{\infty} \widehat{\phi}+O(K^{-\frac{\delta_K}2}). $$
\end{lemma}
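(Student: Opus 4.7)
The plan is to evaluate the inner sum
$$
S(u) := \sum_{c\geq 1} \frac{\mu^2(c)}{c\varphi(c)} h\Big(\frac{4\pi K^{u-1}}{c}\Big)
$$
asymptotically for $u \in [1+\delta_K, \sigma]$, then integrate against $K^u \widehat{\phi}(u)$ and combine with the formula $H^{\pm}(K) = \tfrac{K}{4}\int_0^\infty h + O_N(K^{-N})$ from Lemma \ref{lemma:average h log}. Set $y := 4\pi K^{u-1}$; since $h$ is supported in a compact subset $[A,B]\subset\mathbb{R}_{>0}$, the only $c$ contributing lie in $[y/B, y/A]$, a nonempty interval of length $\asymp K^{u-1}$ once $K^{\delta_K}$ is larger than the ratio $B/A$, which is guaranteed by $\delta_K \gg_h 1/\log K$.

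To evaluate $S(u)$, I apply partial summation using $A(x) := \sum_{c\leq x} \tfrac{c\mu^2(c)}{\varphi(c)} = x + E(x)$ with $E(x) \ll \sqrt{x}$, as furnished by Lemma \ref{lemma theta was found}. This splits
$$
S(u) = \int_1^\infty \frac{h(y/t)}{t^2}\, dt + \int_1^\infty \frac{h(y/t)}{t^2}\, dE(t).
$$
The first integral evaluates to $y^{-1}\int_0^\infty h$ via the change of variables $v = y/t$ (exact once $y > B$, so that the entire support of $h$ lies inside $(0,y)$). For the second, integration by parts kills the boundary terms (at $t=1$ since $h(y)=0$ for $y > B$, and at infinity since $h$ has compact support), leaving
$$
-\int_1^\infty E(t)\Big( -\frac{y h'(y/t)}{t^4} - \frac{2 h(y/t)}{t^3}\Big) dt.
$$
Compact support of $h,h'$ confines the integrand to $t \asymp y$, on which $|E(t)| \ll \sqrt{y}$ and $y/t^4 + 1/t^3 \ll y^{-3}$; multiplying by interval length $\asymp y$ yields $O(y^{-3/2})$. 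Consequently
$$
S(u) = \frac{\int_0^\infty h}{4\pi K^{u-1}} + O\bigl(K^{-3(u-1)/2}\bigr).
$$

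Substituting into $I_{1+\delta_K,\infty}$, the main term is
$$
\frac{\pi}{H^\pm(K)} \cdot \frac{K \int_0^\infty h}{4\pi} \int_{1+\delta_K}^\infty \widehat{\phi}(u)\, du = \int_{1+\delta_K}^\infty \widehat{\phi}(u)\, du + O_N(K^{-N}),
$$
by Lemma \ref{lemma:average h log}, while the error contributes
$$
\frac{\pi}{H^\pm(K)} \int_{1+\delta_K}^\infty K^u \lvert\widehat{\phi}(u)\rvert K^{-3(u-1)/2}\, du \ll \int_{1+\delta_K}^\infty \lvert\widehat{\phi}(u)\rvert K^{-(u-1)/2}\, du \ll K^{-\delta_K/2},
$$
using the rapid decay of $\widehat{\phi}$ to control the upper end of the integral. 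The main technical point is that the partial-summation error must beat the main term by a genuine power of $y$: the saving $y^{-3/2}$ versus $y^{-1}$ is exactly what makes the argument succeed, and it is the combination of the square-root saving in Lemma \ref{lemma theta was found} with the compact support of $h$ (which localizes $t$ to $t \asymp y$) that delivers this saving.
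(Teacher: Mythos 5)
Your proof is correct and follows essentially the same route as the paper: partial summation against $\sum_{c\leq x} c\mu^2(c)/\varphi(c) = x + O(\sqrt{x})$ from Lemma \ref{lemma theta was found}, localization to $t\asymp y$ via the compact support of $h$ to get the $y^{-3/2}$ error, and then integration against $K^u\widehat\phi(u)$ with the normalization $H^\pm(K)=\tfrac K4\int h + O_N(K^{-N})$ from Lemma \ref{lemma:average h log}. The paper writes the Riemann--Stieltjes step as $-\int_0^\infty S(y)(y^{-2}h(4\pi K^{u-1}/y))'\,dy$ rather than explicitly splitting $dA = dt + dE$, but this is only a cosmetic difference in the bookkeeping.
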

\begin{proof}
By Lemma \ref{lemma theta was found}, we have that
$$  S(y):=\sum_{c\leq y}\frac {c\mu^2(c)}{\varphi(c)}  = y+ O(y^{\frac 12}),$$
hence, for $u>1+\delta_K$,
\begin{align*}
\sum_{c \geq 1}\frac {\mu^2(c)}{c\varphi(c)} h\Big( \frac{4\pi K^{u-1}}{c}\Big) &= -\int_0^\infty S(y) \Big(\frac{1}{y^2}h\Big( \frac{4\pi  K^{u-1}}{y}\Big) \Big)' dy \\
&=\int_0^\infty \frac{1}{y^2}h\Big( \frac{4\pi  K^{u-1}}{y}\Big) dy\\
&\hspace{1cm}+ O\Big(\int_0^\infty \Big(\frac 1 {y^3} h \Big( \frac{4\pi  K^{u-1}}{y} \Big)+ \frac{  K^{u-1}}{y^4} h' \Big( \frac{4\pi  K^{u-1}}{y} \Big) \Big) y^{\frac 12} dy  \Big) \\
&= \frac{1}{4\pi  K^{u-1}}\int_0^\infty h +O(K^{-\frac 32 (u-1)}).
\end{align*}
The desired estimate follows by integrating over $u$ against $ K^u \widehat \phi(u)$ and applying Lemma \ref{lemma:average h log}.
\end{proof}

We now evaluate the part of the integral $I_{0,\infty}$ in which $u$ is close to $1$. In this range we can expand $\widehat \phi (u)$ into Taylor series around $u=1$ and recover the transition terms $\widehat \phi^{(j)}(1)$ (see Lemma \ref{lemma taylor series}). The resulting integrals are evaluated in Lemma \ref{lemma:MellinRangeclose to 1} by applying the inverse Mellin transform, truncating the resulting integrals and shifting the contours of integration.  
\begin{lemma}
\label{lemma bound incomplete gamma}
Whenever $x\in \mathbb R_{\geq 0}, j\in \mathbb N, \Re(s)>0 $ and $x|s|\geq 2$,
$$ \int_x^{\infty} u^j e^{-us} du \ll \frac{j! e^{-\Re(s)x }x^j}{|s|}, $$
where the implied constant is absolute.
\end{lemma}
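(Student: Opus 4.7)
The plan is to deduce the bound from a closed-form expression for the integral together with an elementary estimate on a truncated exponential series.

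First, integrating by parts $j+1$ times (or equivalently invoking the standard closed form of the upper incomplete gamma function $\Gamma(j+1,xs) = \int_{xs}^\infty t^j e^{-t}\,dt$) gives, for $\Re(s)>0$, the exact identity
\begin{equation*}
\int_x^{\infty} u^j e^{-us}\,du = \frac{j!\, e^{-xs}}{s^{j+1}} \sum_{m=0}^{j} \frac{(xs)^m}{m!}.
\end{equation*}
This can be checked directly by differentiating both sides in $x$ and noting that both sides vanish as $x\to \infty$ when $\Re(s)>0$. Taking absolute values, and using $|e^{-xs}|=e^{-\Re(s)x}$, yields
\begin{equation*}
\Big| \int_x^{\infty} u^j e^{-us}\,du \Big| \leq \frac{j!\, e^{-\Re(s) x}}{|s|^{j+1}} \sum_{m=0}^{j} \frac{(x|s|)^m}{m!}.
\end{equation*}

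Second, I would bound the truncated exponential sum $\sum_{m=0}^{j} y^m/m!$ with $y:=x|s|\geq 2$. Substituting $k=j-m$ and using the trivial estimate $(j-k)!\geq 1$,
\begin{equation*}
\sum_{m=0}^{j} \frac{y^m}{m!} = y^j \sum_{k=0}^{j} \frac{1}{(j-k)!\, y^k} \leq y^j \sum_{k=0}^{\infty} y^{-k} = \frac{y^{j+1}}{y-1} \leq 2 y^j,
\end{equation*}
where the last inequality uses $y\geq 2$. Inserting this into the previous display produces
\begin{equation*}
\Big| \int_x^{\infty} u^j e^{-us}\,du \Big| \leq \frac{2\,j!\, e^{-\Re(s) x}\, x^j}{|s|},
\end{equation*}
which is the claimed estimate with absolute implied constant.

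There is no substantive obstacle; the argument is short. The one point worth emphasizing is that the hypothesis $x|s|\geq 2$ (rather than the weaker-sounding $x\Re(s)\geq 2$) is exactly what is needed: the truncated exponential in the second display naturally produces the factor $(x|s|)^j$, and $|s|\geq 2/x$ is precisely what forces the geometric-series tail in the third display to contribute an absolute constant, independent of $j$ and of the argument of $s$.
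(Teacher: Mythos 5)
Your proof is correct and takes essentially the same approach as the paper: both first reduce to the exact closed-form expression (the paper writes it as $\frac{e^{-xs}x^j}{s}\sum_{0\le\ell\le j}\frac{j!x^{-\ell}}{(j-\ell)!s^\ell}$, which equals your $\frac{j!e^{-xs}}{s^{j+1}}\sum_{m=0}^j\frac{(xs)^m}{m!}$ after the change of index $\ell=j-m$), and then bound the truncated exponential by a geometric series using $(j-\ell)!\ge 1$ and $x|s|\ge 2$. The paper compresses the last step into ``The proof follows,'' whereas you spell it out explicitly with the constant $2$.
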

\begin{proof}
Applying integration by parts, we reach the exact formula
$$\int_x^{\infty} u^j e^{-us} du = \frac{e^{-xs} x^j}{s}  \sum_{0\leq \ell \leq j} \frac{j!x^{-\ell}}{(j-\ell)!s^\ell}. $$
The proof follows.
\end{proof}

\begin{lemma}
\label{lemma:MellinRangeclose to 1}
	For $j\geq 0$, $K\geq 2$ and $20(\log K)^{-1} \leq \delta_K \leq \tfrac 12$ we have the estimate
$$
\sum_{c\geq 1}\frac {\mu^2(c)}{\varphi(c)} \int_{K^{-\delta_K}}^{ K^{\delta_K}}\frac{(\log v)^j}{c}   h\Big( \frac{4\pi v}{c} \Big) dv =  \mathcal Mh(1)\frac{(\delta_K \log K)^{j+1}}{4\pi (j+1)}+C_{j,h} +O_{\eps}\Big(j! (\delta_K\log K)^{j} K^{\delta_K(-\frac 12+\eps)}\Big),
$$	
where 
\begin{equation}
C_{j,h}:=\frac{(-1)^j}{j+1}\frac{d^{j+1}}{(ds)^{j+1}} \Big(sZ(s)(4\pi)^{s-1} \mathcal Mh(1-s)\Big)\Bigg|_{s=0}, 
\label{equation definition C_j}
\end{equation}
with
$$Z(s)=\zeta(s+1)  \prod_p \Big( 1+\frac{1}{p-1} \Big( \frac 1{p^{s+1}} - \frac 1{p^{2s+1}}\Big) \Big).$$
\end{lemma}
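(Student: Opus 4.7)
The plan is to apply Mellin inversion to $h$, collapse the resulting expression to a single contour integral, and then shift the contour past the pole at $s = 0$. Set $L := \delta_K \log K$. First I would swap the sum over $c$ and the integral over $v$ (justified since for each $v$ only finitely many $c$ contribute, by compactness of $\operatorname{supp} h$), obtaining $\int_{K^{-\delta_K}}^{K^{\delta_K}} (\log v)^j S(v)\, dv$ with $S(v) := \sum_{c\geq 1} \frac{\mu^2(c)}{c\varphi(c)} h(4\pi v/c)$. Since $\operatorname{supp} h$ is compactly contained in $\mathbb{R}_{>0}$ and $\delta_K \log K \geq 20$, the lower endpoint $K^{-\delta_K}$ lies below the infimum of $v$ for which $S(v) \neq 0$ (for $K$ sufficiently large depending on $h$), so the integration extends freely to $\int_0^{K^{\delta_K}}$.

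Next I would apply the inversion $h(y) = \frac{1}{2\pi i}\int_{(c_0)} \mathcal{M}h(1-s) y^{s-1}\, ds$ with $c_0 > 0$, which makes the Dirichlet series $\sum_c \mu^2(c) c^{-s}/\varphi(c) = \prod_p(1 + 1/((p-1)p^s))$ absolutely convergent; a direct Euler-factor manipulation identifies this with the $Z(s)$ of the statement. Integrating the Mellin representation of $S(v)$ against $(\log v)^j$ on $(0, K^{\delta_K})$ and applying Fubini gives
\[
\int_0^{K^{\delta_K}} (\log v)^j S(v)\, dv = \frac{1}{2\pi i}\int_{(c_0)} \mathcal{M}h(1-s)(4\pi)^{s-1} Z(s) F_j(s)\, ds,
\]
where $F_j(s) := \int_0^{K^{\delta_K}} (\log v)^j v^{s-1}\, dv$ equals $\frac{d^j}{ds^j}(K^{s\delta_K}/s)$ for $\Re(s)>0$ and extends meromorphically to all $s \neq 0$.

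I would then shift the contour to $\Re(s) = -1/2 + \eps$, crossing a pole at $s = 0$ of order $j+2$ (simple from $Z(s)$, of order $j+1$ from $F_j$). To read off the residue, decompose $F_j(s) = (-1)^j j!/s^{j+1} + L^{j+1} G_j(Ls)$, where $G_j(u) := \int_0^1 r^j e^{ur}\, dr$ is entire with $G_j(0) = 1/(j+1)$; this corresponds to splitting the $v$-integral as $\int_0^1 + \int_1^{K^{\delta_K}}$. Writing $\Psi(s) := sZ(s)(4\pi)^{s-1}\mathcal{M}h(1-s)$, which is analytic at $s=0$, the first piece contributes the residue $\frac{(-1)^j}{j+1}\Psi^{(j+1)}(0) = C_{j,h}$, while the second piece contributes $\mathcal{M}h(1) L^{j+1}/(4\pi(j+1))$ through the simple pole of $Z$ and the value $G_j(0) = 1/(j+1)$. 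These combine to give exactly the main term plus the constant $C_{j,h}$ asserted by the lemma.

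The hard part will be bounding the remaining integral on $\Re(s) = -1/2 + \eps$. The crucial point is that on $\Re(s) < 0$ the two singular contributions in the decomposition cancel exactly: starting from $K^{s\delta_K}/s = -\int_{K^{\delta_K}}^{\infty} v^{s-1}\, dv$ (valid for $\Re(s) < 0$), differentiating $j$ times and substituting $u = \log v$ yield the compact identity
\[
F_j(s) = -\int_{L}^{\infty} u^j e^{us}\, du.
\]
Applying Lemma~\ref{lemma bound incomplete gamma} (after replacing $s$ by $-s$) with $x = L$ then gives $|F_j(s)| \ll j! L^j K^{-\delta_K(1/2 - \eps)}/|s|$ on the new contour, the condition $L|s| \geq 2$ being ensured by $L \geq 20$ and $|s| \geq 1/2 - \eps$. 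Combined with the superpolynomial decay of $\mathcal{M}h(1-s)$ in $|\Im s|$ (from the smoothness and compact support of $h$) and the standard polynomial growth bound for $Z(s)$ on this line (via convexity for $\zeta(s+1)$), the remaining contour integral is $O_\eps\bigl(j! L^j K^{-\delta_K(1/2-\eps)}\bigr)$, precisely the error term claimed.
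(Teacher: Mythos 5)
Your argument is correct and tracks the same broad strategy as the paper (Mellin representation, contour shift past $s=0$, incomplete-gamma bound), but the organization is genuinely different and, I think, cleaner. The paper splits the $v$-integral at $v=1$, defining $\varphi^-_{K,j}$ (from $(K^{-\delta_K},1)$) and $\varphi^+_{K,j}$ (from $(1,K^{\delta_K})$); it then estimates $\varphi^-$ directly on the line $\Re(s)=\tfrac12$, shifts only $\varphi^+$ to the left (crossing only the simple pole of $Z$, which yields the main term), and obtains $C_{j,h}$ as the \emph{difference} of the two remaining contour integrals $\frac{(-1)^j j!}{2\pi i}\bigl(\int_{(1/2)}-\int_{(-1/2+\eps/2)}\bigr)Z(s)(4\pi)^{s-1}\mathcal Mh(1-s)\,s^{-j-1}\,ds$ via the residue theorem. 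You instead keep the single function $F_j(s)=\int_0^{K^{\delta_K}}(\log v)^j v^{s-1}\,dv$, shift the entire contour at once past a pole of order $j+2$, and compute the residue by the local split $F_j=(-1)^j j!\,s^{-j-1}+L^{j+1}G_j(Ls)$: the singular piece gives $C_{j,h}$ directly, the entire piece hits the simple pole of $Z$ and gives the main term. Your key structural insight — that for $\Re(s)<0$ the two singular contributions recombine into the single incomplete-gamma integral $F_j(s)=-\int_L^\infty u^j e^{us}\,du$, to which Lemma~\ref{lemma bound incomplete gamma} applies immediately — makes the error bound on the shifted line one step, where the paper must estimate $\varphi^-$ and $\varphi^+$ separately on two different lines. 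One small point you gloss over: the extension of the lower limit from $K^{-\delta_K}$ to $0$ is exact only when $\inf\mathrm{supp}(h)>4\pi e^{-20}$ (since $\delta_K\geq 20/\log K$ forces $K^{-\delta_K}\leq e^{-20}$); for more widely supported $h$ one should bound the contribution of $v\in(0,K^{-\delta_K})$ explicitly — this is easy and yields an $h$-dependent constant, which is acceptable since the implied constants depend on $h$ throughout, but the paper handles it more transparently by treating $\int_0^{K^{-\delta_K}}$ with the incomplete-gamma lemma. You should also state explicitly that on $\Re(s)<0$ the formula $F_j(s)=-\int_L^\infty u^j e^{us}\,du$ is the meromorphic continuation, not the original divergent integral; you do signal this, but the phrasing could be sharper.
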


\begin{proof}
	Define
$$ f_{K,j}(c):= \int_{K^{-\delta_K}}^{ K^{\delta_K}}\frac{(\log v)^j}{c}   h\Big( \frac{4\pi v}{c} \Big) dv.$$
By the restriction on the support of $h$, the function $f_{K,j}$ also has compact support on $\mathbb R_{>0}$. We conclude that its Mellin tranform $\varphi_{K,j}(s)$ is entire. Moreover, 
	\begin{align*}
	 \varphi_{K,j}(s):= \int_0^{\infty} x^{s-1}f_{K,j}(x)dx  &= (4\pi)^{s-1} \mathcal Mh(1-s) \int_{K^{-\delta_K }}^{K^{\delta_K }} (\log v)^j v^{s-1}dv
	 \\ &= \varphi^{+}_{K,j}(s)+\varphi^{-}_{K,j}(s),
\end{align*}	 
where
$$\varphi^{\pm}_{K,j}(s)=\pm  (4\pi)^{s-1} \mathcal Mh(1-s) \int_{1}^{K^{\pm\delta_K }} (\log v)^j v^{s-1}dv$$
are also entire. For any $N\geq 1$, applying \cite[Lemma 2.1]{FPS1} yields the crude bound
\begin{equation}
\varphi^{\pm}_{K,j}(s) \ll_{N,j} K |s|^{-N}  \hspace{3cm} (|\Im(s)|\geq 1, |\Re(s)|\leq 1 ).
\label{equation crude bound}
\end{equation}
Next, Mellin Inversion gives the formula
	$$ f_{K,j}(c) = \frac{1}{2\pi i} \int_{(\frac 12)}c^{-s} ( \varphi^{+}_{K,j}(s)+\varphi^{-}_{K,j}(s)) ds.
	$$
Hence, by absolute convergence, 
	  $$ \sum_{c\geq 1}\frac {\mu^2(c)}{\varphi(c)} f_{K,j}(c)= \frac{1}{2\pi i} \int_{(\frac 12)} Z(s)(\varphi^{+}_{K,j}(s)+\varphi^{-}_{K,j}(s)) ds,   $$
	  where
	  $$Z(s):=\sum_{c=1}^{\infty} \frac {\mu^2(c)}{c^s\varphi(c)} =\zeta(s+1)  \prod_p \Big( 1+\frac{1}{p-1} \Big( \frac 1{p^{s+1}} - \frac 1{p^{2s+1}}\Big) \Big).$$
By applying Lemma \ref{lemma bound incomplete gamma}, we obtain the estimate
$$ \varphi^{-}_{K,j}(s)= (4\pi)^{s-1} \mathcal Mh(1-s) \Big(\int_{0}^{1} (\log v)^j v^{s-1}dv +O\Big(j!(\delta_K \log K)^{j} K^{-\delta_K \Re(s)}\Big) \Big)\;\;\;(\Re(s)> 0, |s|\geq \tfrac 1{10}).$$
Moreover,
$$\int_{0}^{1} (\log v)^j v^{s-1}dv =\frac{(-1)^{j}j!}{s^{j+1}}\;\;\;(\Re(s)> 0).$$

From the rapid decay of $\mathcal Mh(1-s)$ on vertical lines (see \cite[Lemma 2.1]{FPS1}), we see that 
$$ \frac{1}{2\pi i} \int_{(\frac 12)} Z(s)\varphi^{-}_{K,j}(s) ds=\frac{(-1)^jj!}{2\pi i} \int_{(\frac 12)} Z(s)(4\pi)^{s-1} \mathcal Mh(1-s) \frac{ds}{s^{j+1}} +O\Big(j!(\delta_K \log K)^{j} K^{-\frac{\delta_K} 2}\Big).$$
	As for the other part of the integral, by applying \eqref{equation crude bound} we can shift the countour to the left until the line $\Re(s)=-\frac 12+\frac{\eps}2$, and reach the identity
$$ \frac{1}{2\pi i} \int_{(\frac 12)} Z(s)\varphi^{+}_{K,j}(s) ds =(4\pi)^{-1} \mathcal Mh(1) \int_{1}^{K^{\delta_K }} (\log v)^j v^{-1}dv+ \frac 1{2\pi i} \int_{(-\frac 12+\frac{\eps}2)} Z(s)\varphi^+_{K,j}(s) ds. $$	
	In a similar fashion as before, we see that
	\begin{equation*}
 \varphi^{+}_{K,j}(s)= (4\pi)^{s-1} \mathcal Mh(1-s) \Big(\frac{(-1)^{j+1}j!}{s^{j+1}} +O\Big(j!(\delta_K \log K)^{j} K^{\delta_K \Re(s)}\Big)\Big) \;\;\;(\Re(s)<0, |s| \geq  \tfrac 1{10})
\end{equation*}
and deduce that
	\begin{multline*}
	\frac{1}{2\pi i} \int_{(\frac 12)} Z(s)\varphi^{+}_{K,j}(s) ds= \mathcal Mh(1)\frac{(\delta_K \log K)^{j+1}}{4\pi (j+1)} 
	+\frac{(-1)^{j+1}j!}{2\pi i} \int_{(-\frac 12+\frac{\eps}2)} Z(s)(4\pi)^{s-1} \mathcal Mh(1-s) \frac{ds}{s^{j+1}}\\+O_{\eps}\Big(	j! (\delta_K\log K)^{j} K^{(-\frac 12+\eps)\delta_K}\Big).  
\end{multline*}	 
Putting these estimates together, we conclude that 
\begin{multline*}
\sum_{c\geq 1}\frac {\mu^2(c)}{\varphi(c)} f_{K,j}(c) = \mathcal Mh(1)\frac{(\delta_K \log K)^{j+1}}{4\pi (j+1)} +\frac{(-1)^jj!}{2\pi i}\Big( \int_{(\frac 12)}- \int_{(-\frac 12+\frac \eps 2)} \Big) Z(s)(4\pi)^{s-1} \mathcal Mh(1-s) \frac{ds}{s^{j+1}}
\\+O_{\eps}\Big(	j! (\delta_K\log K)^{j} K^{(-\frac 12+\eps)\delta_K}\Big).
\end{multline*}   
	The result follows.
\end{proof}

\begin{lemma}[Range $1-\delta_K<u<1+\delta_K$]
Let $\phi$ be an even Schwartz test function.	We have for $K\geq 2$ and odd $J\geq 1$ that	$$ I_{1-\delta_K,1+\delta_K} =\int_{1}^{1+\delta_K} \widehat{\phi}+\frac{\pi K}{H^{\pm }(K)}\sum_{0\leq j \leq J} \frac{\widehat \phi^{(j)}(1)C_{j,h}}{j!(\log K)^{j+1}} +O_{\eps,J}\Big( \delta_K^{J+2}+\frac 1{(\log K)^{J+2}} +\frac {K^{\delta_K (-\frac 12+\eps)}}{\log K} \Big),$$
	where the constants $C_{j,h}$ are defined in \eqref{equation definition C_j}.
	\label{lemma taylor series}
\end{lemma}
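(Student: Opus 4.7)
The plan is to change variables to bring $I_{1-\delta_K,1+\delta_K}$ into a form compatible with Lemma~\ref{lemma:MellinRangeclose to 1}, Taylor expand $\widehat{\phi}$ around $u=1$, and then apply that lemma term by term.

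I would first substitute $v = K^{u-1}$, so that $du = dv/(v\log K)$ and $K^u = Kv$, obtaining
\[
I_{1-\delta_K,1+\delta_K} = \frac{\pi K}{H^{\pm}(K)\,\log K}\int_{K^{-\delta_K}}^{K^{\delta_K}} \widehat\phi\Big(1 + \frac{\log v}{\log K}\Big)\, G(v)\, dv,
\]
where $G(v) := \sum_{c\geq 1}\mu^2(c)/(c\varphi(c))\cdot h(4\pi v/c)$. Since $h$ is compactly supported in $\mathbb{R}_{>0}$, only $c$ in a bounded-ratio range around $v$ contribute to $G(v)$, and combining this with the standard estimate $\sum_{c\in[A,2A]} 1/\varphi(c) = O(1)$ yields the pointwise bound $G(v) = O(1/v)$, hence $\int_{K^{-\delta_K}}^{K^{\delta_K}} G(v)\,dv = O(\delta_K \log K)$. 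I would then Taylor expand $\widehat\phi(1 + \log v/\log K)$ to order $J$ about $u=1$; the remainder is $\ll_J \delta_K^{J+1}$ throughout the range of integration, so integrating it against $G$ and multiplying by the prefactor $O(1/\log K)$ contributes only $O_J(\delta_K^{J+2})$.

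For each $0\leq j\leq J$, swapping the sum over $c$ with the integration over $v$ in the main polynomial part produces precisely the left-hand side of Lemma~\ref{lemma:MellinRangeclose to 1}. Applying that lemma, multiplying by $\pi K\widehat\phi^{(j)}(1)/(H^{\pm}(K)\,j!(\log K)^{j+1})$, and summing over $j$ yields three contributions. The $C_{j,h}$ piece reassembles into the main sum claimed in the lemma. The $\mathcal{M}h(1)$ piece, combined with the identity $K\mathcal{M}h(1)/(4H^{\pm}(K)) = 1 + O_N(K^{-N})$ coming from Lemma~\ref{lemma:average h log}, becomes $\sum_{j=0}^J \widehat\phi^{(j)}(1)\delta_K^{j+1}/(j!(j+1))$, which is the Taylor polynomial about $u=1$ of $\int_1^{1+\delta_K}\widehat\phi$ truncated at order $J$, hence equals $\int_1^{1+\delta_K}\widehat\phi + O_J(\delta_K^{J+2})$. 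Finally, the error piece from Lemma~\ref{lemma:MellinRangeclose to 1}, summed over $j$, contributes $O_{\eps,J}(K^{\delta_K(-1/2+\eps)}/\log K)$, using the bound $\delta_K\leq 1/2$ so that $\delta_K^j\leq 1$.

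The main technical point is controlling the Taylor remainder with the correct power of $\delta_K$; this depends crucially on the pointwise estimate $G(v) = O(1/v)$, together with the logarithmic length $2\delta_K\log K$ of the range of integration, without which one would only obtain an error of size $\delta_K^{J+1}K^{\delta_K}$. The $1/(\log K)^{J+2}$ term in the stated error bound is automatically dominated by $\delta_K^{J+2}$ under the hypothesis $\delta_K\geq 20/\log K$, and is included only to keep the statement stable under a wider range of choices of $\delta_K$.
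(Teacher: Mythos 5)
Your proof is correct and follows the same basic route as the paper: substitute $v = K^{u-1}$, Taylor expand $\widehat\phi$ about $u=1$, apply Lemma~\ref{lemma:MellinRangeclose to 1} to each of the resulting $J+1$ polynomial terms, and reassemble the $\mathcal Mh(1)$-pieces into $\int_1^{1+\delta_K}\widehat\phi$ using Lemma~\ref{lemma:average h log}. The one place you do something different is in controlling the Taylor remainder. The paper applies Lemma~\ref{lemma:MellinRangeclose to 1} once more, with index $J+1$, to the remainder expression $\sum_c \frac{\mu^2(c)}{c\varphi(c)}\int_{K^{-\delta_K}}^{K^{\delta_K}}|\log v|^{J+1}h(4\pi v/c)\,dv$; this is precisely what forces the hypothesis that $J$ be odd, so that $J+1$ is even and $|\log v|^{J+1}$ is a polynomial covered by that lemma. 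You instead bound the remainder elementarily using $G(v)=O(1/v)$ (which is correct: compact support of $h$ confines $c$ to a dyadic range around $v$, and $\sum_{c\in[A,2A]}1/\varphi(c)=O(1)$) together with $\int_{K^{-\delta_K}}^{K^{\delta_K}}G(v)\,dv \ll \delta_K\log K$, yielding $O_J(\delta_K^{J+2})$ directly. Your route is marginally simpler and, as a small bonus, renders the parity restriction on $J$ unnecessary; the paper recovers this flexibility anyway in the proof of Theorem~\ref{theorem final result} by passing to the next odd integer. Your observation that the $(\log K)^{-(J+2)}$ term in the stated error is dominated by $\delta_K^{J+2}$ under the hypothesis $\delta_K\geq 20/\log K$ is also correct; in the paper's version of the argument that term arises from the constant $C_{J+1,h}$ produced by Lemma~\ref{lemma:MellinRangeclose to 1} applied to the remainder, whereas your error analysis never produces it at all.
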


\begin{proof}
By definition of $I_{1-\delta_K,1+\delta_K}$, we need to evaluate the sum
\begin{align*}
	\sum_{c\geq 1}\frac {\mu^2(c)}{\varphi(c)} \int_{1-\delta_K}^{1+\delta_K} \frac { K^u}c  \widehat \phi(u) h\Big( \frac{4\pi  K^{u-1}}{c}\Big) du  
= \frac K{\log K} \sum_{c\geq 1}\frac {\mu^2(c)}{\varphi(c)} \int_{K^{-\delta_K}}^{ K^{\delta_K}}\frac{1}{c} \widehat \phi\Big( \frac{\log v}{\log K}+1 \Big)   h\Big( \frac{4\pi v}{c} \Big) dv.
 \end{align*} 
Taking Taylor series and applying\footnote{The error term obtained after the Taylor series expansion contains the expression $\sum_{c\geq 1} \frac{\mu^2(c)}{\varphi(c)}\int_{K^{-\delta_K}}^{K^{\delta_K}} |\log v|^{J+1} h(\frac{4\pi v}c)dv$, which can be evaluated using Lemma \ref{lemma:MellinRangeclose to 1} whenever $J+1$ is even.} Lemma \ref{lemma:MellinRangeclose to 1}, this is
  \begin{multline*}
	= K\sum_{0\leq j\leq J} \frac{\widehat \phi^{(j)}(1)}{j!(\log K)^{j+1}} \sum_{c\geq 1}\frac {\mu^2(c)}{\varphi(c)} \int_{K^{-\delta_K}}^{ K^{\delta_K}}\frac{(\log v)^j}{c}   h\Big( \frac{4\pi v}{c} \Big) dv\\+O_{\eps,J}\Big( K\delta_K^{J+2} +\frac K{(\log K)^{J+2}}+\frac {K^{1+\delta_K (-\frac 12+\eps)}}{\log K} \Big).
\end{multline*}
Applying Lemma \ref{lemma:MellinRangeclose to 1} once more, we reach the expression 
\begin{align*}
\frac{K \mathcal Mh(1)}{4\pi }\sum_{0\leq j\leq J} \frac{\widehat \phi^{(j)}(1)\delta_K^{j+1}}{(j+1)!}+K\sum_{0\leq j \leq J} \frac{\widehat \phi^{(j)}(1)C_{j,h}}{j!(\log K)^{j+1}}  + O_{\eps,J}\Big( K\delta_K^{J+2} +\frac K{(\log K)^{J+2}}+\frac {K^{1+\delta_K (-\frac 12+\eps)}}{\log K} \Big).
\end{align*}
Applying Lemma \ref{lemma:average h log}, the proof follows.
\end{proof}

Collecting the estimates in this section, we reach the following theorem.

\begin{theorem}
\label{theorem final result}
Let $\phi$ be an even Schwartz test function for which supp$(\widehat \phi) \subset (-2,2)$. Assuming the Riemann Hypothesis for Dirichlet $L$-functions, for $K\geq 2$ we have the estimate 
\begin{multline*}
	\DplusminusAvg = \widehat \phi(0) \Bigg(1 +\frac{ \frac{\int_0^{\infty} h\cdot \log }{\int_0^{\infty} h}-\log (4\pi)}{\log K} \Bigg)+\frac{\phi(0)}2  + \sum_{1 \leq j\leq J} \frac{c_j\widehat \phi^{(j-1)}(0)}{2^{j}(\log K)^j}\\ \mp \int_{1}^{\infty} \widehat{\phi}\pm\sum_{1\leq j \leq J} \frac{D_{j,h}\widehat \phi^{(j-1)}(1)}{(\log K)^{j}} +O_{\eps,J}\Big( \frac 1{(\log K)^{J+1}}\Big),
	  \end{multline*}
where the $c_j$ are defined in Lemma \ref{lemma:Seven}, and
$$ D_{j,h} =-\frac{4\pi}{ \int_{\mathbb R} h \cdot (j-1)!}C_{j-1,h} =\frac{4\pi (-1)^{j}}{j!\int_{\mathbb R^+}h} \frac{d^{j}}{(ds)^{j}} \Big(sZ(s)(4\pi)^{s-1} \mathcal Mh(1-s)\Big)\Bigg|_{s=0}, $$
with
\begin{equation}
\label{equation Z(s)}
Z(s)=\zeta(s+1)  \prod_p \Big( 1+\frac{1}{p-1} \Big( \frac 1{p^{s+1}} - \frac 1{p^{2s+1}}\Big) \Big).
\end{equation}
\end{theorem}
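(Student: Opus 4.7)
The plan is to combine the power-saving formula \eqref{eq:power saving in one level density}, which holds under GRH for Dirichlet $L$-functions and already displays the three analytic ingredients of $\DplusminusAvg$: the logarithmic-derivative main term, the prime-square sum, and the transition integral $\mp I_{0,\infty}$. The admissible error $O_\eps(K^{\sigma/2-1+\eps}+K^{-1/3+\eps})$ is a power saving since $\sigma<2$, hence is $O_{\eps,J}((\log K)^{-(J+1)})$ for any fixed $J$. Applying Lemma \ref{lemma:Seven} with $X=K^2$ to the prime-square sum, and using $(\log X)^j = 2^j(\log K)^j$, recovers exactly the contribution $\phi(0)/2+\sum_{1\leq j\leq J}c_j\widehat\phi^{(j-1)}(0)/(2^j(\log K)^j)$ appearing in the statement.

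It remains to evaluate $I_{0,\infty}$. Since $\mathrm{supp}(h)\subset \mathbb R_{>0}$, we have $I_{0,\infty}=I_{1-\delta_K,\infty}$ for $K$ large, for any parameter $\delta_K\gg 1/\log K$. Split $I_{1-\delta_K,\infty}=I_{1-\delta_K,1+\delta_K}+I_{1+\delta_K,\infty}$. Lemma \ref{lemma range u large} treats the second piece, yielding $\int_{1+\delta_K}^\infty \widehat\phi+O(K^{-\delta_K/2})$. Lemma \ref{lemma taylor series}, applied with an odd truncation parameter $J'\in\{J,J+1\}$, treats the first piece and yields $\int_1^{1+\delta_K}\widehat\phi$ plus the sum $(\pi K/H^\pm(K))\sum_{0\leq j\leq J'}\widehat\phi^{(j)}(1)C_{j,h}/(j!(\log K)^{j+1})$ plus errors. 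The two integral pieces combine into $\int_1^\infty \widehat\phi$, matching the term $\mp \int_1^\infty \widehat\phi$ in the statement once the outer $\mp$ is applied. By Lemma \ref{lemma:average h log} we have $\pi K/H^\pm(K)=4\pi/\int_{\mathbb R^+}h+O_N(K^{-N-1})$, so after reindexing $j\mapsto j-1$, each coefficient of $\widehat\phi^{(j-1)}(1)/(\log K)^j$ becomes $\mp 4\pi C_{j-1,h}/((j-1)!\int_{\mathbb R^+}h)$. Distributing the outer $\mp$ over the minus sign built into $D_{j,h}=-4\pi C_{j-1,h}/((j-1)!\int_{\mathbb R^+}h)$ produces exactly the $\pm D_{j,h}\widehat\phi^{(j-1)}(1)/(\log K)^j$ in the statement; the alternative expression for $D_{j,h}$ via the derivative $\frac{d^j}{(ds)^j}(sZ(s)(4\pi)^{s-1}\mathcal Mh(1-s))|_{s=0}$ is an immediate unraveling of \eqref{equation definition C_j}.

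The last task is to select $\delta_K$ so that every error term is absorbed into $O_{\eps,J}((\log K)^{-(J+1)})$. The contributions to control are: $\delta_K^{J+2}$ (Taylor cutoff), $K^{\delta_K(-1/2+\eps)}/\log K$ (the Mellin shift of Lemma \ref{lemma:MellinRangeclose to 1}), $K^{-\delta_K/2}$ (Lemma \ref{lemma range u large}), and, when $J$ is even, the extra term $D_{J+1,h}\widehat\phi^{(J)}(1)/(\log K)^{J+1}$ generated by taking $J'=J+1$. Setting $\delta_K=M(\log\log K)/\log K$ with $M=M(J,\eps)$ large, the first of these is $O((\log\log K)^{J+2}/(\log K)^{J+2})$ and the middle two are $O((\log K)^{-M(1/2-\eps)-1})$ and $O((\log K)^{-M/2})$, all dominated by $(\log K)^{-(J+1)}$, while the extra term trivially has the right order. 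The main obstacle is not conceptual but lies in the bookkeeping: carefully matching the parity requirement in Lemma \ref{lemma taylor series}, tracking the reindexing from $(\log K)^{j+1}$ in the lemma to $(\log K)^j$ in the theorem, and reconciling the various signs produced by the distribution of $\mp$ over the minus in $D_{j,h}$, so that the final expression has precisely the shape claimed.
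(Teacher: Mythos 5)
Your proposal is correct and follows essentially the same route as the paper: start from \eqref{eq:power saving in one level density}, apply Lemma \ref{lemma:Seven} with $X=K^2$, split $I_{0,\infty}=I_{1-\delta_K,1+\delta_K}+I_{1+\delta_K,\infty}$, apply Lemmas \ref{lemma range u large} and \ref{lemma taylor series}, and choose $\delta_K\asymp (\log\log K)/\log K$. The only cosmetic difference is that the paper assumes without loss of generality that $J$ is odd, while you keep $J$ arbitrary and feed an odd $J'\in\{J,J+1\}$ into Lemma \ref{lemma taylor series}, absorbing the extra terms; these are equivalent bookkeeping choices.
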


\begin{proof}
Recall \eqref{eq:power saving in one level density}, which is valid for $\sigma=$supp$($sup$(\widehat \phi))<2$:
\begin{equation*}
	\DplusminusAvg =  \widehat \phi(0) \bigg(1 +\frac{\frac{ \int_{\mathbb R^+} h \cdot \log}{\int_{\mathbb R^+} h }-\log (4\pi)}{\log K} \bigg)+2\sum_{p} \frac 1p \widehat \phi \Big( \frac{2 \log p}{\log (K^2)} \Big) \frac{\log p}{\log (K^2)} \mp I_{0,\infty} + O_{\eps}(K^{\frac{\sigma}2-1+\eps}+K^{-\frac 13+\eps}).	 
	 \end{equation*}
We can clearly assume without loss of generality that $J$ is odd.
The sum over primes is estimated in Lemma \ref{lemma:Seven}. Moreover, we recall that for $K$ large enough $I_{0,1-\delta_K}=0$ and therefore we have that 
$$I_{0,\infty}=I_{1-\delta_K,1+\delta_K}+ I_{1+\delta_K,\infty},  $$ 
which together with Lemmas \ref{lemma range u large} and \ref{lemma taylor series} and the choice $\delta_K=3(J+3)\log\log K/\log K$ implies the desired result.	 
\end{proof}

\begin{proof}[Proof of Theorem \ref{theorem main}]
It follows immediately from Theorem \ref{theorem final result} with 

\begin{equation}
S_{j,h}=D_{j,h}=\frac{4\pi (-1)^{j}}{j!\int_{\mathbb R^+}h} \frac{d^{j}}{(ds)^{j}} \Big(sZ(s)(4\pi)^{s-1} \mathcal Mh(1-s)\Big)\Bigg|_{s=0}
\label{equation definition S}
\end{equation}  
(see \eqref{equation Z(s)});
\begin{equation}
 R_{1,h}= \frac{ \int_{\mathbb R^+} h \cdot \log}{\int_{\mathbb R^+} h }-\log (4\pi)+\int_1^{\infty} \frac{\theta(t)-t}{t^2}dt+1;
 \label{equation definition R 1}
\end{equation}
and for $j\geq 2$, 
\begin{equation}
 R_{j,h}= \frac{1 }{(j-2)!}  \int_1^{\infty}(\log t)^{j-2} \Big( \frac{\log t}{j-1}- 1 \Big) \frac{\theta(t)-t}{t^2}dt
 \label{equation definition R 2}
\end{equation}
 (note that these do not depend on $h$).
\end{proof}

\label{section average over k}

\end{document}